\theoremstyle{plain} 
\newtheorem{thm}{Theorem}[section]
\newtheorem{prop}[thm]{Proposition}
\newtheorem{cor}[thm]{Corollary}
\newtheorem{lem}[thm]{Lemma}
\theoremstyle{definition}
\newtheorem{obs}[thm]{Observation}
\newtheorem{eg}[thm]{Example}
\newtheorem{ques}[thm]{Question}
\newtheorem{rmk}[thm]{Remark}
\numberwithin{equation}{section}
\newcommand{\fm}{\mathfrak{m}}
\newcommand{\CC}{\mathbb{C}}
\newcommand{\ZZ}{\mathbb{Z}}
\newcommand{\NN}{\mathbb{N}}
\newtheorem{chunk}[thm]{\hspace*{-1.065ex}\bf}
\def\CI{\operatorname{\mathsf{CI-dim}}}
\def\G-dim{\operatorname{\mathsf{G-dim}}}
\def\cx{\operatorname{\mathsf{cx}}}
\def\pd{\operatorname{\mathsf{pd}}}
\def\depth{\operatorname{\mathsf{depth}}}
\def\edim{\operatorname{\mathsf{embdim}}}
\def\Hom{\operatorname{\mathsf{Hom}}}
\def\Tr{\mathsf{Tr}\hspace{0.01in}}
\def\Ext{\operatorname{\mathsf{Ext}}}
\def\Spec{\operatorname{\mathsf{Spec}}}
\def\Tor{\operatorname{\mathsf{Tor}}}
\def\cod{\operatorname{\mathsf{codim}}}
\def\dim{\operatorname{\mathsf{dim}}}
\DeclareMathOperator{\hh}{H}
\def\len{\operatorname{\mathsf{\lambda}}}
\DeclareMathOperator{\rank}{rank}
\DeclareMathOperator{\Supp}{Supp}
\newcommand{\tensor}{\otimes^{\bf L}}
\def\urltilda{\kern -.15em\lower .7ex\hbox{\~{}}\kern .04em}
\def\urldot{\kern -.10em.\kern -.10em}\def\urlhttp{http\kern -.10em\lower -.1ex
\hbox{:}\kern -.12em\lower 0ex\hbox{/}\kern -.18em\lower 0ex\hbox{/}}
\begin{document}
\baselineskip=15pt

\title[Bounds on depth of tensor products of modules]{Bounds on depth of tensor products of modules}

\author{Olgur Celikbas} \address{Department of Mathematics, 323
  Mathematical Sciences Bldg, University of Missouri--Columbia,
  Columbia, MO 65211 USA} \email{celikbaso@missouri.edu}

\author{Arash Sadeghi}
\address{School of Mathematics, Institute for Research in Fundamental Sciences (IPM), P.O. Box: 19395-5746, Tehran, Iran.}
\email{sadeghiarash61@gmail.com}

\author{Ryo Takahashi}
\address{Graduate School of Mathematics, Nagoya University, Furocho, Chikusaku, Nagoya 464-8602, Japan}
\email{takahashi@math.nagoya-u.ac.jp}
\urladdr{http://www.math.nagoya-u.ac.jp/~takahashi/}

\keywords{Auslander's transpose, complete intersection dimension, complexity, depth formula, Gorenstein dimension, tensor product, vanishing of Ext and Tor.}

\thanks{Sadeghi's research was partially supported by a grant from IPM (No. 92130026).}
\thanks{Takahashi was partly supported by JSPS Grant-in-Aid for Scientiﬁc Research (C) 25400038.}

\subjclass[2000]{13D07, 13C14, 13C15}

\begin{abstract}
Let $R$ be a local complete intersection ring and let $M$ and $N$ be nonzero finitely generated $R$-modules. We employ Auslander's transpose in the study of the vanishing of Tor and obtain useful bounds for the depth of the tensor product $M\otimes_{R}N$. An application of our main argument shows that, if $M$ is locally free on the the punctured spectrum of $R$, then either $\depth(M\otimes_{R}N)\geq \depth(M)+\depth(N)-\depth(R)$, or  $\depth(M\otimes_{R}N)\leq \cod(R)$. Along the way we generalize an important theorem of  D. A. Jorgensen and determine the number of consecutive vanishing of $\Tor_i^R(M,N)$ required to ensure the vanishing of all higher $\Tor_i^R(M,N)$.
\end{abstract}

\maketitle{}

\setcounter{tocdepth}{1}

\section{Introduction}

This paper originates in an attempt to deal with the following question that was implicitly raised and studied by Huneke, Jorgensen and Wiegand:

\begin{ques} (\cite{HJW}, see also \cite[6.6]{CeRo}) \label{queintro} Let $R$ be a local complete intersection ring of codimension $c$ and let $M$ and $N$ be finitely generated $R$-modules. Assume $M\otimes_{R}N$ is a $(c+1)$st syzygy of some finitely generated $R$-module. Under what conditions is the pair $(M,N)$ Tor-independent, i.e., is $\Tor^{R}_{i}(M,N)=0$ for all $i\geq 1$?
\end{ques}

Recall that a local ring $(R, \mathfrak{m}, k)$ is said to be a \textit{complete intersection} if the $\mathfrak m$-adic completion $\widehat{R}$ of $R$ is of the form $Q/(\underline{f})$,
where $\underline{f}$ is a $Q$-regular sequence and $Q$ can be taken as a ring of formal power series over the field $k$, or over
a complete discrete valuation ring with residue field $k$. The nonnegative integer $\cod(R)=\edim(R)-\dim(R)$ is called the \textit{codimension} of $R$.

A remarkable consequence of Tor-independence over complete intersection rings $R$ is the \emph{depth formula}, $\depth(M)+\depth(N)=\depth(R)+\depth(M\otimes N)$, established by Auslander \cite[1.2]{Au} when $R$ is regular, and by Huneke and Wiegand \cite[2.5]{HW1} when $R$ is singular. The depth formula is central to homological commutative algebra and has been studied extensively; see for example \cite{ArY, BerJor, IC, CJ, I, Sa}. In particular, when $R$ is regular, it is a natural extension of the classical Auslander -- Buchsbaum formula: $\pd(M)+\depth(M)=\depth(R)$ \cite[3.7]{AusBuc}.

The condition that $M\otimes_{R}N$ has high depth properties is not enough for Tor-independence in general; see for example \cite[3.14]{Ce}. The motivation for Question \ref{queintro} comes from Auslander's seminal work: if $R$ is regular (i.e., $c=0$) and $M\otimes_{R}N$ is torsion-free, equivalently, is a first syzygy module, then $(M,N)$ is Tor-independent \cite[3.1]{Au} and \cite[Corollary 2]{Li}. Huneke and Wiegand proved that if $R$ is a hypersurface (i.e. $c=1$), $M\otimes_RN$ is reflexive (equivalently, is a second syzygy module) and either $M$ or $N$ has a rank, then $\Tor_i^R(M,N)=0$ for all $i\ge 1$ [32, 2.7]; see also \cite[1.4]{GO}. Huneke, Jorgensen and Wiegand \cite{HJW} analyzed the universal pushforward and quasi-liftings of modules, and obtained similar vanishing results for the cases where $c=2$ and $c=3$ with escalating assumptions; although their techniques break down when $c\geq 4$, their methods have proved noteworthy to examine torsion in tensor products of modules.

Question \ref{queintro} has been studied in \cite{Ce} for modules whose complexity is strictly less than the codimension of the complete intersection ring considered; see (\ref{cx}). By exploiting the vanishing of a generalized version of Hochster's $\theta(-,-)$ pairing, Dao \cite{Da2} obtained certain conditions on the modules $M$ and $N$ so that if $M\otimes_{R}N$ is a $(c+1)$st syzygy over a complete intersection -- in an unramified regular local ring -- of codimension $c$, then $\Tor^{R}_{i}(M,N)=0$ for all $i\geq 1$. More recently the unramified condition has been removed and Dao's result has been improved over smooth graded complete intersections \cite{GORS}; see also \cite{Mark1} and \cite{Walkernew}.

In this paper we consider Question \ref{queintro} for a pair of modules that satisfy the depth formula (\ref{df}) and develop techniques that are entirely different from those previously used in the literature; cf., \cite{Ce}, \cite{CeD2}, \cite{GORS}, \cite{Da2}, \cite{HJW}, \cite{HW1} and \cite{HW2}. In the following, $\len$ denotes length.

\begin{thm} \label{thmintro0} Let $R$ be a local complete intersection ring of codimension $c$ and let $M$ and $N$ be nonzero finitely generated $R$-modules. Assume:
\begin{enumerate}[\rm(i)]
\item $\len(\Tor^{R}_{i}(M,N))<\infty$ for all $i\geq 1$ (e.g., $M$ is locally free on the punctured spectrum of $R$.)
\item $\depth(M\otimes_{R}N)\geq c+1$ (e.g., $\dim(R)\geq c+1$ and $M\otimes_{R}N$ is a $(c+1)$st syzygy module.)
\end{enumerate}
Then $\depth(M\otimes_{R} N)\geq \depth(M)+\depth(N)-\depth(R)$. Moreover, $(M,N)$ is Tor-independent if and only if $(M,N)$ satisfies the depth formula.
\end{thm}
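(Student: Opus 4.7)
My plan is to combine the Auslander-transpose machinery with the paper's earlier generalization of Jorgensen's theorem on consecutive Tor vanishing. The central observation is that hypothesis (i) forces every nonzero $\Tor_i^R(M,N)$ to be a finite length $R$-module, hence of depth zero and supported only at $\fm$, while hypothesis (ii) provides the numerical slack needed to push through depth estimates. Concretely, I would start from the four-term Auslander exact sequence
\[
0 \to \Ext^1_R(\Tr M, N) \to M\otimes_R N \to \Hom_R(\dual{M}, N) \to \Ext^2_R(\Tr M, N) \to 0,
\]
together with the standard Auslander-Bridger relations whose vanishing and depth are tied to those of the low-degree $\Tor^R_j(M,N)$. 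Under (i) the outer terms are thus of depth zero when nonzero, and the depth lemma combined with $\depth(M\otimes_R N)\geq c+1 \geq 1$ forces $\Ext^1_R(\Tr M,N)=0$: otherwise the image of the middle map would have to vanish, making $M\otimes_R N$ itself of finite length, which violates (ii) since $M$ and $N$ are both nonzero by Nakayama.

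With the first Ext term eliminated, the inequality $\depth(M\otimes_R N) \geq \depth(M)+\depth(N)-\depth(R)$ reduces to a lower bound on $\depth\Hom_R(\dual{M}, N)$, which I would obtain by an Ischebeck-type argument on $\dual M$ combined with the local Tor-independence at non-maximal primes that (i) provides; the local Huneke-Wiegand depth formula can then be propagated along the non-maximal locus and lifted to yield the global estimate.

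For the equivalence, Tor-independence implies the depth formula by Huneke-Wiegand. For the converse, I would iterate the strategy above on syzygies of $M$: at each step, the slack provided by (ii) combined with the depth formula and the depth lemma yields the vanishing of one further $\Tor^R_i(M, N)$. After enough steps one obtains the number of consecutive Tor vanishings required by the generalized Jorgensen theorem proved earlier in the paper, which then extends this to $\Tor^R_i(M,N)=0$ for all $i\geq 1$.

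The main obstacle will be this converse: extracting the required stretch of consecutive vanishings from the single numerical statement that the depth formula holds. Controlling how depth propagates through the Auslander-transpose sequences at successive syzygies, so that each iteration consumes exactly one unit of the slack from (ii) and converts it into one more vanishing Tor, is the technical heart of the argument, and is precisely where the codimension $c$ of the complete intersection ring enters.
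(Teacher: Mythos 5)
Your high-level ingredients (Auslander transpose, finite-length Tor for localization, depth lemma, the generalized Jorgensen rigidity) are the right ones, and your instinct that the two hypotheses together should force the outer Ext terms in an Auslander four-term sequence to vanish is essentially the mechanism the paper exploits. But there are two genuine gaps.

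First, you apply the four-term sequence at $n=0$ directly to $M$ and assert that hypothesis (i) makes $\Ext^1_R(\Tr M,N)$ finite length when nonzero. This is false in general. Take $R$ regular, $M=R/(t)$ for a nonzerodivisor $t$, and $N=R$: then $M^{\ast}=0$, so the four-term sequence degenerates to $\Ext^1_R(\Tr M,N)\cong M\otimes_RN\cong R/(t)$, which is not of finite length, while $\Tor_i^R(M,N)=0$ for all $i\geq 1$ so (i) holds vacuously. The point is that (i) controls the localizations of $\Tor_i$ for $i\geq 1$, but $\Ext^1_R(\Tr M,N)$ lives in the $\Tor_0$ slot and its localization is not governed by (i). The paper circumvents exactly this obstruction: Lemma \ref{le5} passes via a finite projective hull $0\to M\to T\to Y\to 0$ (with $\pd T<\infty$, $\G-dim Y=0$), kills the finite-projective-dimension piece using the Araya--Yoshino argument (Lemma \ref{L1}), and then works with $Y$ and its cosyzygies, all of which have $\CI=0$. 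For such modules Lemma \ref{pr1cor}(ii) (proved via Tate (co)homology) gives the clean equivalence between vanishing of $\Ext^i_R(\Tr Y,N)$ and of $\Tor_i^R(Y,N)$, which is what makes the four-term sequence usable at all. Your ``iterate on syzygies of $M$'' should be ``iterate on cosyzygies of $Y$,'' and you must account separately for the $\pd<\infty$ piece $T$.

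Second, the Ischebeck-plus-local-lifting step you propose for establishing $\depth(M\otimes_RN)\geq\depth M+\depth N-\depth R$ is not developed and not needed: once the implication $(iv)\Rightarrow(i)$ of Theorem \ref{corthm1} is in hand, the inequality is automatic. If it failed one would have $\depth M+\depth N>\depth R+\depth(M\otimes_RN)$, which is (iv), hence $(M,N)$ is Tor-independent, hence the depth formula holds with equality, a contradiction. So the inequality is a corollary of the equivalence, not a separate depth computation. Your plan to iterate one Tor-vanishing per unit of depth slack and then invoke Proposition \ref{Torcor} is exactly what Lemma \ref{le5} and Theorem \ref{t3} do, but the iteration lives on the totally reflexive side of the projective hull, the correct exponent is $\cx(M,N)$ rather than the codimension $c$ itself, and the bookkeeping showing $\cx(M,X)=\cx(Y_j,X)$ along the way is needed to make Proposition \ref{Torcor} applicable to the final module $L=Y_n$.
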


If a local ring $R$ has an \emph{isolated singularity}, i.e., $R_{p}$ is regular for all prime ideals $p$ in the punctured spectrum of $R$, and if $M\otimes_{R}N$ is a syzygy module, then it follows that $\len(\Tor^{R}_{i}(M,N))<\infty$ for all $i\geq 1$; see \cite[3.1]{Au} and \cite[Corollary 2]{Li}. Therefore, as a particular example, if $M\otimes_{R}N$ is a third syzygy module over the codimension two complete intersection isolated singularity $R=\CC[[x_1, \ldots, x_n]]/(x_1^2+x_2^2+\ldots+x_n^2,  x_1^2+2x_2^2+\dots+nx_n^2)$, with $n\geq 5$, then $\depth(M\otimes_{R}N)\geq 3$, and hence we conclude from Theorem \ref{thmintro0} that $(M,N)$ satisfies the depth formula (\ref{df})  if and only if $(M,N)$ is Tor-independent; see Question \ref{queintro}.

In addition to analyzing Question \ref{queintro}, Theorem \ref{thmintro0} yields useful bounds for depth of tensor products of modules; see Corollaries \ref{cor2} and \ref{cor21}. It determines a necessary condition for the depth formula and hence, from another point of view, it complements \cite[2.4]{CeD2}. Our proof of Theorem \ref{thmintro0} relies upon results of Auslander and Bridger \cite{AuBr}. A technical detail worth pointing out is that we assume $M\otimes_{R}N$ has sufficient depth, but we do not assume $M$, $N$ or $M\otimes_{R}N$ is a syzgy module, or equivalently, satisfies any Serre's conditions $(S_{n})$ \cite[3.8]{EG}, in Theorem \ref{thmintro0}; cf. \cite[3.4]{Ce}, \cite[2.2]{CeD2},  \cite[5.11]{GORS} and \cite[7.6]{Da2}.

Theorem \ref{thmintro0} becomes more interesting if we consider an application of it over hypersurface rings and apply a result of Huneke and Wiegand \cite[3.1]{HW1}. We establish in Corollary \ref{corthm0} that:

\begin{cor} \label{corthm1} Let $R$ be a hypersurface ring with an isolated singularity and let $M$ and $N$ be nonfree maximal Cohen-Macaulay $R$-modules. Then $\depth(M\otimes_{R}N)\leq 1$.
\end{cor}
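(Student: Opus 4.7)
The plan is to argue by contradiction: assuming $\depth(M\otimes_R N)\geq 2$, I will combine Theorem \ref{thmintro0} with a rigidity theorem of Huneke and Wiegand to force one of $M, N$ to be free. First dispose of small dimensions: if $\dim R\leq 1$, then $\depth(M\otimes_R N)\leq \dim R\leq 1$ automatically (note $M\otimes_R N\neq 0$ by Nakayama), so there is nothing to prove. We may therefore assume $\dim R\geq 2$.

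Suppose, for a contradiction, that $\depth(M\otimes_R N)\geq 2$. The aim now is to check that both hypotheses of Theorem \ref{thmintro0} are in force so that its ``moreover'' clause can be triggered. For hypothesis (i), since $R$ is an isolated singularity, for every non-maximal prime $\fp$ the localization $R_\fp$ is regular, so $M_\fp$ is $R_\fp$-free and consequently $\Tor^R_i(M,N)_\fp=0$ for all $i\geq 1$; hence $\len(\Tor^R_i(M,N))<\infty$ for all $i\geq 1$. For hypothesis (ii), a hypersurface has codimension $c=1$, so the standing assumption $\depth(M\otimes_R N)\geq 2=c+1$ is precisely what is needed. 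Theorem \ref{thmintro0} therefore delivers
\[
\depth(M\otimes_R N)\;\geq\;\depth M+\depth N-\depth R\;=\;\depth R,
\]
the last equality because $M$ and $N$ are maximal Cohen--Macaulay. Combined with the universal bound $\depth(M\otimes_R N)\leq \dim R=\depth R$, we obtain equality: $M\otimes_R N$ is itself maximal Cohen--Macaulay and the depth formula holds. The ``moreover'' part of Theorem \ref{thmintro0} then forces $(M,N)$ to be Tor-independent.

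The final step is to invoke \cite[3.1]{HW1}: over a hypersurface, if $M$ and $N$ are finitely generated modules (one of which has constant rank) and $M\otimes_R N$ is maximal Cohen--Macaulay with $\Tor_i^R(M,N)=0$ for $i\geq 1$, then $M$ or $N$ is free. The rank hypothesis is automatic in our setting, because $M$ and $N$ are locally free on the punctured spectrum of the isolated singularity $R$, hence have constant rank at each minimal prime. Thus \cite[3.1]{HW1} applies and contradicts the standing assumption that both $M$ and $N$ are nonfree, completing the proof.

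The main obstacle is essentially a bookkeeping one: identifying which ancillary hypotheses of \cite[3.1]{HW1} must be checked (notably the rank condition) and verifying them from the isolated-singularity assumption. All of the depth-theoretic content is absorbed into Theorem \ref{thmintro0}, so the heart of the argument is the two-step sequence ``enough depth on $M\otimes_R N$ $\Rightarrow$ Tor-independence'' (via Theorem \ref{thmintro0}) and ``Tor-independence of MCM modules over a hypersurface $\Rightarrow$ one of them is free'' (via Huneke--Wiegand).
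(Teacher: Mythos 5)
Your proof is correct and tracks the paper's argument closely: reduce to $\dim R\geq 2$, observe that the isolated-singularity hypothesis makes $R$ a normal domain (so the rank condition in Huneke--Wiegand is free), show that sufficient depth on $M\otimes_R N$ forces it to be maximal Cohen--Macaulay, and invoke \cite[3.1]{HW1} to obtain a free factor, contradicting the nonfreeness of $M$ and $N$. The one routing difference is that the paper argues contrapositively through Corollary~\ref{cor22} --- it first uses \cite[3.1]{HW1} to rule out $M\otimes_R N$ being MCM, and then applies Corollary~\ref{cor22} (with $\depth(M\otimes_R N)\neq\depth N$) to get $\depth(M\otimes_R N)\leq\cx(M,N)\leq\cod R=1$ --- whereas you assume $\depth(M\otimes_R N)\geq 2$, push it through Theorem~\ref{thmintro0}, and derive the contradiction directly. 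Both routes rest on the same underlying engine (Theorem~\ref{t3}), so this is a presentational rather than substantive difference. One small inaccuracy worth flagging: you state \cite[3.1]{HW1} with Tor-independence of $(M,N)$ as a hypothesis, but that theorem only assumes $M\otimes_R N$ is MCM and one of $M$, $N$ has rank; Tor-independence is part of its \emph{conclusion}. Your detour through the ``moreover'' clause of Theorem~\ref{thmintro0} to supply Tor-independence is therefore harmless but unnecessary --- once you know $M\otimes_R N$ is MCM and $R$ is a domain, \cite[3.1]{HW1} already applies.
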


A remarkable rigidity theorem of Jorgensen \cite[2.3]{Jo1} states that if $R$ is a $d$-dimensional complete intersection, $r=\min\{\cx(M), \cx(N)\}$ and $\Tor_{n}^R(M,N)=\cdots=\Tor_{n+r}^R(M,N)=0$ for some $n\geq d-b+1$, where $b=\max\{\depth(M), \depth(N)\}$, then $\Tor^{R}_{i}(M,N)=0$ for all $i\geq d-b+1$. In proving Theorem \ref{thmintro0}, we discover that our argument generalizes Jorgensen's theorem. Our rigidity result, stated as Proposition \ref{Torcor}, depends on the complexity $\cx(M,N)$ of the pair $(M,N)$ rather than the minimum of complexities, where $\cx(M,N)$ can be in general strictly less than $\min\{\cx(M), \cx(N)\}$; see Example \ref{eg6}.

\section{Definitions and Preliminary Results}

\begin{chunk} \textbf{Convention.} \label{why} Throughout the paper $R$ denotes a local ring, that is, a commutative Noetherian ring with unique maximal ideal $\fm$ and residue field $k$. All modules considered over $R$ are finitely generated. We have, by definition, $\depth(0)=\infty$ and $\pd(0)=-\infty$. For an $R$-module $X$, we set $X^{\ast}=\Hom_{R}(X,R)$.

\end{chunk}

\begin{chunk} \label{a1} \textbf{Auslander's Transpose.} (\cite{AuBr})
Let $M$ be an $R$-module with a projective presentation $P_1\overset{f}{\rightarrow}P_0\rightarrow M\rightarrow 0$. Then the \emph{transpose} $\Tr M$ of $M$
is the cokernel of $f^{\ast}=\Hom_{R}(f,R)$ and hence is given by the exact sequence:
$0\rightarrow M^*\rightarrow P_0^*\rightarrow P_1^*\rightarrow \Tr M\rightarrow 0$.

If $n$ is a positive integer, $\mathcal{T}_n M$ denotes the transpose of the $(n-1)st$ syzygy of $M$, i.e.,
\begin{equation} \tag{\ref{a1}.1}
\mathcal{T}_n M=\Tr\Omega^{n-1}M.
\end{equation}
There is an exact sequence of functors \cite[2.8]{AuBr}:
\begin{align}
\tag{\ref{a1}.2} \Ext^1_R(\mathcal{T}_{n+1}M,-) \hookrightarrow \Tor_n^R(M,-)  & \rightarrow  \Hom_R(\Ext^n_R(M,R),-)
 \rightarrow \Ext^2_R(\mathcal{T}_{n+1}M,-). \notag{}
\end{align}
\end{chunk}

\begin{chunk} \textbf{Gorenstein and complete intersection dimensions.} \label{CIdim} (\cite{AuBr, AGP, Larsbook})
A finitely generated $R$-module $M$ is said to be \emph{totally reflexive} if the
natural map $M \to M^{\ast\ast}$ is bijective and $\Ext_{R}^{i}(M,R)=0=\Ext_{R}^{i}(M^{\ast},R)$ for all $i\geq 1$.

The infimum of $n$ for which there exists an exact sequence
$0 \to X_{n} \to \dots  \to X_{0} \to M\to 0,$
such that each $X_{i}$ is totally reflexive, is called the \emph{Gorenstein dimension} of $M$. If $M$ has Gorenstein dimension $n$, we write $\G-dim(M)=n$. Therefore $M$ is totally reflexive if and only if $\G-dim(M)\leq 0$, where it follows by convention that $\G-dim(0)=-\infty$.

A diagram of local ring maps $R \to R' \twoheadleftarrow Q$ is called a {\em quasi-deformation} provided that $R \to R'$ is flat and the kernel of the surjection $R' \twoheadleftarrow Q$ is generated by a $Q$-regular sequence. The \emph{complete intersection dimension} of $M$ is:
\begin{equation*}
\CI(M) = \inf\{ \pd_Q(M \otimes_{R} R') - \pd_Q(R') \ | \
R \to R' \twoheadleftarrow Q \ {\text{is a quasi-deformation}}\}.
\end{equation*}

We catalogue a few key properties of complete intersection dimension:

\begin{enumerate}[\rm(i)]
\item If $R$ is a complete intersection ring, then $\CI(M)<\infty$; see \cite[1.3]{AGP}
\item If $\CI(M)<\infty$, then $\CI(M)=\G-dim(M)=\depth(R)-\depth(M)$; see \cite[1.4]{AGP}
\item If $\CI(M)=0$, then $\CI_{R_{p}}(M_{p})=0$ for all $p\in \Supp(M)$; see \cite[1.6]{AGP}
\item If $\CI(M)<\infty$, then $\Tor^R_{i}(M,N)=0$ for all $i\geq \CI(M)+1$ if and only if $\Tor^{R}_{i}(M,N)=0$ for all $i\gg 0$; see \cite[4.9]{AvBu}
\end{enumerate}
\end{chunk}

\begin{chunk} \textbf{Complexity.} \label{cx} (\cite{Av1}) The \textit{complexity} of a sequence of nonnegative integers $B =
\{b_i\}_{i\geq 0}$ is: $$\cx(B) = \inf\{r\in \NN \cup \{0\} \mid
b_n\leq A \cdot n^{r-1} \ \text {for some real number} \  A  \ \text
{and for all} \ n\gg 0 \}.$$ According to this notation, the
complexity of a pair $(M,N)$ of finitely generated $R$-modules can be
defined as \cite{AvBu}:
$$\cx(M,N) = \cx\left( \{ \rank_{k}(\Ext^{i}_{R}(M,N)\otimes_{R}k) \} \right)$$

The complexity $\cx(M)$ of $M$ is defined as $\cx(M,k)$ and it follows from the definition that $\cx(M,N)=\cx(\Omega^iM, N)$ for all nonnegative integers $i$. Moreover one has the following properties:

\begin{enumerate}[\rm(i)]
\item Assume $R$ is a complete intersection. Then,
\begin{enumerate}[(a)]
\item $\cx(M,N)\leq\text{min} \{ \cx(M), \cx(N) \} \leq \cod(R)$; see \cite[5.7]{AvBu}.
\item If $M$ is maximal Cohen-Macaulay, then $\cx(M^{\ast},N)=\cx(M,N)$; see \cite[5.6]{AvBu}.
\end{enumerate}

\item Assume $\CI(M)<\infty$. Then $\cx(M) \leq \edim(R)-\depth(R)$. If, in addition, $R$ is not a complete intersection, then the inequality is strict; see \cite[5.6]{AGP}.

\item If $\CI(M)=0$, then $\cx(M^*,N)\leq\cx(M^{\ast})=\cx(M)$; see \cite[4.1.2]{AvBu} and \cite[3.2]{BeJ3}.
\end{enumerate}
\end{chunk}

\begin{chunk} \textbf{Depth Formula.} \label{df} (\cite{Au})
Two finitely generated $R$-modules $M$ and $N$ satisfy the \emph{depth formula} provided that $\displaystyle{\depth(M) + \depth(N)= \depth(R) + \depth(M \otimes_R N)}$.

Huneke and Wiegand proved in \cite[2.5]{HW1} that Tor-independent modules over complete intersection rings satisfy the depth formula. The depth formula, for tensor products of finitely generated modules, is initially due to Auslander \cite{Au}; see also Christensen and Jorgensen \cite{CJ}, Foxby \cite{Foxby} and Iyengar \cite{I} for extensions of that formula to certain complexes of modules.
\end{chunk}

\section{Main result}

In this section we prove our main result, Theorem \ref{corthm1}: it subsumes Theorem \ref{thmintro0} advertised in the introduction. Section 4 contains several interesting applications of our result on depth of tensor products of modules.

\begin{thm}  \label{corthm1} Let $R$ be a local complete intersection ring and let $M$ and $N$ be nonzero finitely generated $R$-modules. Assume $\len(\Tor^{R}_{i}(M,N))<\infty$ for all $i\geq 1$. Assume further that $\depth(M\otimes_{R}N)\geq \cx(M,N)+1$. Then the following conditions are equivalent:
\begin{enumerate}[\rm(i)]

\item $\Tor^{R}_{i}(M,N)=0$ for all $i\geq 1$.
\vspace{0.03in}

\item $\depth(M)+\depth(N)\geq \depth(R)+\cx(M,N)$.
\vspace{0.03in}

\item $\depth(M)+\depth(N)=\depth(R)+\depth(M\otimes_{R}N)$.
\vspace{0.03in}

\item $\depth(M)+\depth(N)\geq \depth(R)+\depth(M\otimes_{R} N)$.
\vspace{0.03in}

\item $\depth(M)+\depth(N)\geq \depth(R)$ and $\Tor^R_{i}(M,N)=0$ for all $i\gg 0$.
\vspace{0.03in}

\item $\depth(M)+\depth(N)\geq \depth(R)$ and $\Ext_R^{i}(M,N)=0$ for all $i\gg 0$.
\end{enumerate}
\end{thm}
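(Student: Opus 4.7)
The plan is to prove the equivalences cyclically: (i) $\Rightarrow$ (iii) $\Rightarrow$ (iv) $\Rightarrow$ (ii) $\Rightarrow$ (i), and to handle (v) and (vi) via the equivalences (i) $\Leftrightarrow$ (v) and (i) $\Leftrightarrow$ (vi). All of the action is concentrated in (ii) $\Rightarrow$ (i); the remaining implications follow from standard tools collected in Section 2.

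The easy implications go as follows. (i) $\Rightarrow$ (iii) is the Huneke--Wiegand depth formula for complete intersections recorded in \ref{df}. (iii) $\Rightarrow$ (iv) is immediate. For (iv) $\Rightarrow$ (ii) I use the standing hypothesis $\depth(M \otimes_R N) \geq \cx(M,N)+1$: combining it with (iv) yields
$$\depth(M) + \depth(N) \geq \depth(R) + \depth(M\otimes_R N) \geq \depth(R) + \cx(M,N).$$
For (i) $\Rightarrow$ (v) and (i) $\Rightarrow$ (vi), vanishing of all positive Tor modules trivially implies eventual Tor vanishing; eventual Ext vanishing follows from the complete intersection equivalence between eventual Ext and Tor vanishing (a consequence of the support variety theory underlying \ref{CIdim}(iv)); and the required depth inequality is furnished by the depth formula.

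The crux is (ii) $\Rightarrow$ (i). Here I would exploit the Auslander--Bridger exact sequence \textup{(\ref{a1}.2)}:
$$\Ext^1_R(\mathcal{T}_{n+1}M,N) \hookrightarrow \Tor_n^R(M,N) \to \Hom_R(\Ext^n_R(M,R),N) \to \Ext^2_R(\mathcal{T}_{n+1}M,N).$$
Since $R$ is a complete intersection, \ref{CIdim}(ii) gives $\CI(M) = \depth(R) - \depth(M)$, so $\Ext^n_R(M,R) = 0$ for $n > \depth(R) - \depth(M)$; in those degrees the sequence collapses to $\Tor_n^R(M,N) \cong \Ext^1_R(\mathcal{T}_{n+1}M,N)$. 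The strategy is to assume for contradiction that some $\Tor_i^R(M,N)$ is nonzero: its finite length forces its depth to be zero, and by propagating this depth obstruction through the Auslander--Bridger sequence (applying the depth lemma to projective presentations of $\mathcal{T}_{n+1}M$) together with the polynomial growth rate $\cx(M,N)$ of a minimal free resolution of $M$, I would bound $\depth(M\otimes_R N)$ above by $\cx(M,N)$, contradicting the standing hypothesis $\depth(M\otimes_R N) \geq \cx(M,N)+1$.

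For (v) $\Rightarrow$ (i), \ref{CIdim}(iv) upgrades eventual Tor vanishing to vanishing from degree $\CI(M)+1$ on, so only finitely many Tor modules can be nonzero; combined with the depth inequality in (v) and the standing hypotheses, the same propagation argument (equivalently the Jorgensen-type rigidity generalized in the forthcoming Proposition \ref{Torcor}) forces the intermediate Tor modules to vanish. The implication (vi) $\Rightarrow$ (i) reduces to (v) via the aforementioned Ext--Tor equivalence over complete intersections. The principal obstacle is (ii) $\Rightarrow$ (i): the challenge is to convert a purely numerical depth condition and the finite-length Tor hypothesis into honest Tor-vanishing, which requires a delicate interaction among the Auslander--Bridger transpose, the complexity $\cx(M,N)$, and the depth lemma.
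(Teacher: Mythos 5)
Your cycle of easy implications matches the paper's exactly, and you correctly identify (ii) $\Rightarrow$ (i) as the crux. But your sketch of that implication is not a proof and, more importantly, it omits the key technical idea the paper actually uses.

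The paper does not attack $\depth(M\otimes_R N)$ directly through the Auslander--Bridger sequence and "propagate a depth obstruction." Instead, it first proves a reduction lemma (Lemma \ref{le5}): given the hypotheses of Theorem \ref{t3}, one replaces $M$ by a module $L$ with $\CI(L)=0$, with $\Tor_i^R(L,N)=0$ for $i=1,\dots,n+1$, with the remaining Tors shifted isomorphically, and with the same complexity against any $X$. The construction rests on the \emph{finite projective hull} $0\to M\to T\to Y\to 0$ (with $\pd T<\infty$, $\G-dim Y=0$), followed by $n$ iterated pushforward steps $0\to Y_{j-1}\to R^{(t)}\to Y_j\to 0$. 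At each step the depth hypothesis on $M\otimes_R N$ and $N$, the depth lemma, and the finite-length hypothesis are used to kill one more Tor, while Lemma \ref{L1} (Araya--Yoshino) disposes of the finite-projective-dimension piece $T$. Only after this reduction does the rigidity result (Proposition \ref{Torcor}, proved via \textup{(\ref{a1}.2)} and Lemma \ref{pr1cor}) apply, since rigidity requires an honest interval of $r$ consecutive vanishing Tors of a $\CI$-dimension-zero module. Your sketch jumps straight to "apply Auslander--Bridger, get a contradiction with $\depth(M\otimes_R N)\ge\cx(M,N)+1$," which essentially asserts the conclusion of Corollary \ref{cor2} (a \emph{consequence} of Theorem \ref{corthm1}); there is no obvious way to make that precise without first performing the peeling-off reduction of Lemma \ref{le5}. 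That reduction is the missing idea, and without it the argument has a genuine gap.

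One smaller remark: for (v) $\Rightarrow$ (i) you appeal to "the same propagation argument"; the paper instead observes that (v) forces $\cx(M,N)=0$, so (v) $\Rightarrow$ (ii) is trivial and no new argument is needed. Alternatively, (v) $\Rightarrow$ (i) follows directly from Lemma \ref{L1} (Araya--Yoshino) once you note $\depth(N)\geq\CI(M)$; either route is cleaner than invoking rigidity here.
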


It requires substantial preparation to prove Theorem \ref{corthm1}. Our main argument is to prove (ii)$ \Longrightarrow $(i); see (\ref{df}). The equivalence of $(v)$ and $(vi)$, i.e., $\Tor^R_{i}(M,N)=0$ for all $i\gg 0$ if and only if $\Ext_R^{i}(M,N)=0$ for all $i\gg 0$, is due to Avramov and Buchweitz \cite[6.1]{AvBu}, and it holds independently of the depth inequality over arbitrary complete intersections. Also a result of Araya and Yoshino \cite[2.5]{ArY} gives (v)$  \Longrightarrow $(i) without any assumption on $\depth(M\otimes_{R}N)$; see Lemma \ref{L1}. Here we include conditions $(v)$ and $(vi)$ in Theorem \ref{corthm1} for completeness; note that we do not assume the vanishing of all higher $\Tor_{i}^{R}(M,N)$ to prove (ii)$ \Longrightarrow $(i); cf., \cite[2.7]{JAB}.

We start with a lemma which is crucial to our proof of Theorem \ref{corthm1}.  One can find a proof of Lemma \ref{pr1cor}(i, ii) in the unpublished manuscript of Sadeghi \cite[3.3 and 3.4]{Sa}; here a shorter argument is included for the convenince of the reader. We write $M\approx N$ to denote a \emph{stable isomorphism}, i.e., $M\oplus F\cong N\oplus G$ for some free modules $F$ and $G$.

\begin{lem}  \label{pr1cor}
Let $R$ be a local ring and let $N$ and $Y$ be nonzero finitely generated $R$-modules. Assume that $\CI(Y)=0$. Then,
\begin{enumerate}[\rm(i)]
\item $\CI(\Tr Y)=0$.
\item $\Ext^{i}_R(\Tr Y,N)=0$ for all $i\geq 1$ if and only if $\Tor_{i}^R(Y,N)=0$ for all $i\geq 1$.
\item $\CI(\mathcal{T}_iY)=0$ and $\mathcal{T}_iY\approx \Omega\mathcal{T}_{i+1}Y$ for all $i\geq 1$.
\item $\cx(\mathcal{T}_iY,N)=\cx(Y^{\ast},N)$ for all $i\geq 1$.
\item Let $r$ and $s$ be positive integers. Assume $\cx(Y^{\ast},N)\leq r-1$.  Assume further that $\Ext^{i}_{R}(\mathcal{T}_sY, N)=0$ for all $i=1, \ldots, r$. Then $\Tor^{R}_{i}(Y,N)=0$ for all $i\geq 1$.
\end{enumerate}
\end{lem}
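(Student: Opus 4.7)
The plan is to exploit the total reflexivity of $Y$ (which follows from $\CI(Y)=0$ by (\ref{CIdim})) together with the defining exact sequence
\[
0\to Y^{\ast}\to P_0^{\ast}\to P_1^{\ast}\to\Tr Y\to 0.
\]
Splitting this yields the stable isomorphism $Y^{\ast}\approx\Omega^2\Tr Y$; the analogous dualization of a projective resolution of $\Omega^{i-1}Y$, compared with the four-term defining sequence of $\Tr\Omega^iY$, produces $\mathcal{T}_iY\approx\Omega\mathcal{T}_{i+1}Y$, the stable isomorphism that drives the index bookkeeping throughout.

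For (i), $Y$ totally reflexive forces $\Tr Y$ totally reflexive (classical Auslander--Bridger, since $\Tr\Tr Y\approx Y$), hence $\G-dim(\Tr Y)=0$. To upgrade this to $\CI(\Tr Y)=0$, fix a quasi-deformation $R\to R'\twoheadleftarrow Q$ witnessing $\CI(Y)=0$: $\Tr$ commutes with flat base change, so $(\Tr Y)\otimes_RR'\cong\Tr_{R'}(Y\otimes_RR')$, and the defining sequence of the latter over $Q$ together with $\pd_Q(Y\otimes_RR')<\infty$ gives $\pd_Q(\Tr_{R'}(Y\otimes_RR'))<\infty$; hence $\CI(\Tr Y)<\infty$, and the equality $\CI(\Tr Y)=\G-dim(\Tr Y)=0$ from (\ref{CIdim}) closes the argument. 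Part (iii) follows immediately: syzygies of totally reflexive modules are totally reflexive, so $\CI(\Omega^{i-1}Y)=0$ and (i) applies. Part (iv) drops out by combining (iii) with the invariance of complexity under syzygies recorded in (\ref{cx}) and the isomorphism $Y^{\ast}\approx\Omega^2\mathcal{T}_1Y$.

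The engine for (ii) and (v) is the Auslander--Bridger sequence (\ref{a1}.2): total reflexivity kills $\Ext^n(Y,R)$, collapsing it to $\Tor^R_n(Y,N)\cong\Ext^1(\mathcal{T}_{n+1}Y,N)$ for $n\geq 1$, and iterating the syzygy relation of (iii) yields
\[
\Ext^i(\mathcal{T}_kY,N)\cong\Tor_{k-i}(Y,N)\text{ for }1\le i\le k-1,\qquad\Ext^i(\mathcal{T}_kY,N)\cong\Ext^{i-k+1}(\Tr Y,N)\text{ for }i\ge k.
\]
For (v), (iv) reads the hypothesis as $\cx(\mathcal{T}_sY,N)\leq r-1$, and the $r$ consecutive Ext vanishings combined with $\CI(\mathcal{T}_sY)=0$ from (iii) trigger the Avramov--Buchweitz/Jorgensen rigidity for Ext over finite complete intersection dimension, giving $\Ext^i(\mathcal{T}_sY,N)=0$ for all $i\geq 1$; the two identifications translate this into $\Tor_j(Y,N)=0$ for $1\leq j\leq s-1$ together with $\Ext^j(\Tr Y,N)=0$ for $j\geq 1$. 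The forward direction of (ii) is the tail-vanishing version of the same principle: the assumed vanishing of $\Ext(\Tr Y,N)$ shifts into eventual vanishing of $\Ext(\mathcal{T}_{n+1}Y,N)$, and the Ext analogue of (\ref{CIdim}) promotes this to $\Tor_n(Y,N)=0$; the reverse direction of (ii) mirrors the (v) argument with $s$ chosen large enough that Tor vanishing supplies more than $\cx(Y^{\ast},N)+1$ consecutive Ext vanishings of $\mathcal{T}_sY$. Finally, applying the forward direction of (ii) to the $\Ext(\Tr Y,N)=0$ obtained inside (v) yields $\Tor_i(Y,N)=0$ for all $i\geq 1$.

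The main obstacle will be invoking the precise Ext-rigidity statement for modules of finite complete intersection dimension (the Ext analogue of (\ref{CIdim})) and carefully tracking the many index shifts produced by the iterated syzygy relation $\mathcal{T}_iY\approx\Omega\mathcal{T}_{i+1}Y$; the quasi-deformation bookkeeping in (i) is a secondary but nontrivial concern.
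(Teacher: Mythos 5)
Your overall plan for parts (iii), (iv), (v) matches the paper's, and part (ii) is a genuine variant: the paper proves (ii) in one stroke via Tate (co)homology duality, $\widehat{\Tor}_i^R(Y,N)\cong\widehat{\Ext}^{-i-1}_R(Y^{\ast},N)\cong\widehat{\Ext}^{-i+1}_R(\Tr Y,N)$ together with \cite[4.7, 4.9]{AvBu}, whereas you run the Auslander--Bridger exact sequence (\ref{a1}.2) plus the syzygy relation $\mathcal{T}_iY\approx\Omega\mathcal{T}_{i+1}Y$ and then feed the resulting tail-vanishing into a rigidity statement. That is a legitimate alternative, but note that your forward direction needs an \emph{Ext} analogue of (\ref{CIdim})(iv) --- namely that for $\CI(M)=0$, eventual vanishing of $\Ext^i_R(M,N)$ forces vanishing for all $i\geq 1$ --- which is true (it follows from the same Tate-(co)homology periodicity the paper invokes) but is not among the facts catalogued in the preliminaries, so it should be cited explicitly rather than treated as already available.

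The real gap is in part (i). You propose to fix a quasi-deformation $R\to R'\twoheadleftarrow Q$ witnessing $\CI(Y)=0$ and conclude that ``the defining sequence \ldots together with $\pd_Q(Y\otimes_R R')<\infty$ gives $\pd_Q(\Tr_{R'}(Y\otimes_R R'))<\infty$.'' This does not follow. Writing $Y'=Y\otimes_R R'$, the defining sequence is $0\to(Y')^{\ast}\to(P_0')^{\ast}\to(P_1')^{\ast}\to\Tr_{R'}Y'\to 0$; the two middle terms are $R'$-free and hence have finite $Q$-projective dimension, so the sequence converts finiteness of $\pd_Q((Y')^{\ast})$ into finiteness of $\pd_Q(\Tr_{R'}Y')$ --- but you only know $\pd_Q(Y')<\infty$, and $Y'$ does not appear in that sequence. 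The right-exact part $(P_0')^{\ast}\to(P_1')^{\ast}\to\Tr_{R'}Y'\to 0$ by itself is just a two-step presentation, which never bounds projective dimension. What you are implicitly using is exactly the nontrivial fact that the paper isolates and cites: $\CI(Y)=0$ implies $\CI(Y^{\ast})=0$ (\cite[3.5]{Be2}), from which $\pd_Q((Y')^{\ast})<\infty$ follows. The paper then combines this with $Y^{\ast}\approx\Omega^2\Tr Y$ and a two-out-of-three argument for CI-dimension in short exact sequences to get $\CI(\Tr Y)<\infty$, and upgrades to $\CI(\Tr Y)=\G-dim(\Tr Y)=0$ exactly as you do. To close your gap you need either to cite that dualization result or to supply an independent proof of it; the quasi-deformation bookkeeping you gesture at is precisely the content of that lemma, not a shortcut around it.
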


\begin{proof}
We start by noting that $\CI(Y^*)=0$ \cite[3.5]{Be2}.
Moroever, by definition, we have that $Y^{\ast} \approx \Omega^2\Tr Y$. Thus it follows that $\CI(\Tr Y)<\infty$. As $Y$ is totally reflexive, so is $\Tr Y$ and hence $\CI(\Tr Y)=0$; see  \cite[4.1]{AuBr} and (\ref{CIdim})(ii). Consequently (i) follows.

We have, for all $i\in\mathbb{Z}$, that
$
\widehat{\Tor}_i^R(Y,N) \cong \widehat{\Ext}^{-i-1}_R(Y^{\ast}, N) \cong \widehat{\Ext}^{-i+1}_R(\Tr Y,N)
$; see for example \cite[4.4.7]{AvBu}.
Therefore,  \begin{align} \notag{}
\Tor_{i}^{R}(Y,N) =0 \text{ for all } i\geq 1
& \Longleftrightarrow  \widehat{\Tor}_i^R(Y,N)=0  \text{ for all } i \in \mathbb{Z} &\\ \notag{}
& \Longleftrightarrow  \widehat{\Ext}^{i}_R(\Tr Y,N)=0 \text{ for all } i \in \mathbb{Z}  \\  \notag{}
&  \Longleftrightarrow\Ext^{i}_R(\Tr Y,N)=0 \text{ for all } i\geq 1.
\end{align}
Here the first and the last equivalence follow from \cite[4.7 and 4.9]{AvBu}. This proves (ii).

Notice, since $Y$ is totally reflexive, $\Ext^{i}_{R}(Y,R)=0$ for all $i\geq 1$; see (\ref{CIdim}). Hence it follows from (\ref{a1}.1) that $\mathcal{T}_iY\approx\Omega\mathcal{T}_{i+1}Y$ for all $i\geq 1$. Moreover we have that $\CI(\Omega^{i-1}Y)=0$ for all $i\geq 1$; see \cite[1.9.1]{AGP}. Therefore, by (i), $\CI(\Tr \Omega^{i-1}Y)=0$  for all $i\geq 1$. As $\mathcal{T}_i Y=\Tr \Omega^{i-1}Y$ for all $i\geq 1$, (iii) follows.

Since $Y^{\ast} \approx \Omega^2\Tr Y$, it follows that $\cx(Y^{\ast}, N)=\cx(\Omega^2\Tr Y, N)=\cx(\Tr Y, N)=\cx(\mathcal{T}_{1}Y, N)$; see (\ref{cx}). Moreover $\cx(\mathcal{T}_{i+1} Y, N)=\cx(\Omega\mathcal{T}_{i+1} Y, N)$ which is, by (iii), equal to $\cx(\mathcal{T}_{i} Y, N)$ for all $i\geq 1$. This establishes (iv).

Now assume $r$ and $s$ are positive integers, $\cx(Y^{\ast},N)\leq r-1$ and that $\Ext^{i}_{R}(\mathcal{T}_sY, N)=0$ for all $i=1, \ldots, r$. Then it follows from (iv) that $\cx(\mathcal{T}_sY,N)\leq r-1$. Furthermore, by (iii), we see $\CI(\mathcal{T}_sY)=0$. Since $\Ext^{i}_{R}(\mathcal{T}_sY, N)=0$ for all $i=1, \ldots, r$, \cite[4.3]{CeD} implies that $\Ext^{i}_{R}(\mathcal{T}_sY, N)=0$ for all $i\geq 1$. Now we can make use of the stable isomorphism obtained in (iii) and deduce that $\Ext^{i}_R(\Tr Y,N)=0$ for all $i\geq 1$. Consequently (iv) follows from (ii).
\end{proof}

Auslander \cite[2.2]{Au} proved that finitely generated modules over unramified regular local rings are Tor-rigid, i.e., if $\Tor^{R}_{n}(M,N)=0$ for some nonnegative integer $n$, then $\Tor^{R}_{i}(M,N)=0$ for all $i\geq n$. Lichtenbaum \cite[Corollary 1]{Li} extended Auslander's result to arbitrary regular local rings. Murthy, in \cite[1.6]{Mu}, established a partial extension of Tor-rigidity over complete intersections: if $R$ has codimension $c$, and $\Tor^{R}_{n}(M,N)=\cdots=\Tor^{R}_{n+c}(M,N)=0$ for some nonnegative integer $n$, then $\Tor^{R}_{i}(M,N)=0$ for all $i\geq n$. It is well-known that Murthy's rigidity result --  and hence many of its consequences --  does not hold over Gorenstein rings (indeed over AB rings) that are not complete intersections; see \cite[2.14]{CeRo} and \cite[Theorem 2]{Sega}. Jorgensen \cite[2.3]{Jo1}, rather than considering vanishing intervals of lengths determined by the codimension of the ring, used the notion of complexity and studied the vanishing of Tor. We recall Jorgensen's result next; see also \cite[4.9]{AvBu}, \cite[3.6]{Be2} and \cite[2.3]{Jo3}.

\begin{thm} (Jorgensen \cite[2.3]{Jo1}) \label{Jorcx}
Let $R$ be a $d$-dimensional local complete intersection ring and let $M$ and $N$ be finitely generated $R$-modules. Set $r=\text{min} \{ \cx(M), \cx(N) \}$ and $b=\max\{\depth(M), \depth(N)\}$. If $\Tor^R_n(M,N)= \dots = \Tor^R_{n+r}(M,N)=0$
for some integer $n\geq d-b+1$, then $\Tor^{R}_{i}(M,N)=0$ for all $i\geq d-b+1$.
\end{thm}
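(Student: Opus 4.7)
By the symmetry $\Tor^R_i(M,N) \cong \Tor^R_i(N,M)$, I may assume $b = \depth(N)$ (if not, swap $M$ and $N$; this leaves $r$ unchanged). The first move is to apply (\ref{CIdim})(iv) to the pair $(N,M)$: since $\CI(N) = \depth(R) - \depth(N) = d - b$, the desired conclusion $\Tor^R_i(M,N) = 0$ for all $i \geq d - b + 1$ is equivalent to the much weaker-looking statement $\Tor^R_i(M,N) = 0$ for all $i \gg 0$. Thus the theorem reduces to the single implication: \emph{under the hypotheses, the pair $(M,N)$ has eventually vanishing Tor.}

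\textbf{Heart of the argument.} To deduce eventual vanishing from the given $r+1$ consecutive zeroes, I would exploit the bound $\cx(M,N) \leq \min\{\cx(M),\cx(N)\} = r$ from (\ref{cx})(i)(a). The key structural input is the Avramov-Buchweitz-Gulliksen theory: the total Tor $\bigoplus_{i \geq 0} \Tor^R_i(M,N)$ is a finitely generated graded module over the polynomial ring of cohomology operators $\mathcal{S} = k[\chi_1,\ldots,\chi_c]$ (with $\deg \chi_j = 2$), and its Krull dimension as an $\mathcal{S}$-module equals $\cx(M,N) \leq r$. A finitely generated graded $\mathcal{S}$-module of Krull dimension at most $r$ cannot have $r+1$ consecutive zero components in sufficiently high degree without vanishing eventually. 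The role of the hypothesis $n \geq d - b + 1$ is precisely to place the given zeroes above the threshold beyond which torsion components supported at the irrelevant ideal have already disappeared (this is the same threshold that powers (\ref{CIdim})(iv)).

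\textbf{Main obstacle and an alternative.} The main obstacle is establishing this polynomial-ring rigidity cleanly, since it needs either the cohomology operator machinery or a surrogate for it. An alternative route is through a quasi-deformation $R \to R' \twoheadleftarrow Q$ with $Q$ regular and $\ker(Q \twoheadrightarrow R')$ generated by a $Q$-regular sequence $\underline{f} = f_1,\ldots,f_c$. One lifts the modules to $R'$ and studies the action of the Eisenbud-Gulliksen cohomology operators arising from $\underline{f}$ on the total Tor; the problem reduces to a rigidity statement for graded modules over the operator ring, and ultimately to Auslander-Lichtenbaum rigidity over the regular ring $Q$. In either approach, the threshold $n \geq d - b + 1$ is what matches the Auslander-Buchsbaum depth equality on the $Q$-side and cannot be relaxed.
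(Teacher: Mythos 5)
The paper does not actually prove this statement; Theorem~\ref{Jorcx} is quoted verbatim from Jorgensen's paper \cite[2.3]{Jo1} and used as a black box (e.g., in the proof of Proposition~\ref{Torcor}). So there is no proof in the paper to compare against, and your proposal must be judged on its own merits.

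Your opening reduction is sound: since $R$ is Cohen--Macaulay, $\CI(N)=\depth(R)-\depth(N)=d-b$ when $b=\depth(N)$, and then $(\ref{CIdim})(\mathrm{iv})$ applied to the pair $(N,M)$ together with the symmetry of Tor converts ``$\Tor_i^R(M,N)=0$ for $i\ge d-b+1$'' into ``$\Tor_i^R(M,N)=0$ for $i\gg 0$.'' That much is clean. And your overall framework --- cohomology operators \`a la Gulliksen/Eisenbud/Avramov--Buchweitz, with the complexity recording the Krull dimension of the associated graded module --- is the right circle of ideas, and your suggested alternative via a quasi-deformation and the Eisenbud operators is essentially what Jorgensen does.

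However, the ``heart of the argument'' as stated has two real gaps, not merely details to be filled in. First, the grading is off: the operators $\chi_j$ \emph{lower} homological degree by $2$ on $\Tor$ (they raise it by $2$ on $\Ext$), so $\bigoplus_i\Tor_i^R(M,N)$ is not literally a finitely generated graded $\mathcal{S}$-module; one has to pass through Tate (co)homology or through $\bigoplus_i\Ext_R^i(M,N)\otimes_R k$ and translate back, which requires more than a regrading. Second and more seriously, the rigidity claim ``a f.g.\ graded $\mathcal{S}$-module of Krull dimension $\le r$ with $r+1$ consecutive zero components in sufficiently high degree vanishes eventually'' does not follow from a Hilbert-polynomial count the way you suggest. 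Because $\deg\chi_j=2$, the module splits into even and odd parts, each a f.g.\ module over $k[\chi_1,\dots,\chi_c]$ with the $\chi_j$ now in degree $1$, and each part can have Krull dimension as large as $r$. Among $r+1$ consecutive integers, each parity class contributes only about $(r+1)/2$ zeroes, which is not enough to annihilate a Hilbert polynomial of degree $r-1$ once $r\ge 2$. So the claim, as worded, is not a consequence of the stated Krull dimension bound. Relatedly, the role of the threshold $n\ge d-b+1$ is not that it uniformly bounds the degrees of the $\mathcal{S}$-module generators (those depend on $M$ and $N$ in an uncontrolled way), so the ``above the irrelevant-ideal support'' heuristic does not by itself justify the bound. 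Jorgensen's actual argument circumvents both problems by inducting on the codimension and chasing the long exact sequences induced by a single Eisenbud operator, rather than invoking a one-shot Hilbert-function rigidity statement; you gesture at this under ``an alternative,'' but that route is really the proof, not a fallback.
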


We observe that the vanishing interval required in Theorem \ref{Jorcx} is determined by a finer bound, namely $\cx(M,N)$. Recall that, if $R$ is a complete intersection, then $\cx(X,N)=\cx(X^{\ast},N)$ for all maximal Cohen-Macaulay $R$-modules $X$; see (\ref{cx})(i)(b).

\begin{prop} \label{Torcor}
Let $R$ be a $d$-dimensional local ring and let $M$ and $N$ be finitely generated $R$-modules such that $\CI(M)<\infty$. Set $b=\depth(M)$ and assume $r$ is a positive integer with $\cx(M,N)\leq r-1$. Assume further that $\cx(X,N)=\cx(X^{\ast},N)$ for all finitely generated $R$-modules $X$ with $\CI(X)=0$ (e.g., $R$ is a complete intersection ring.) Then the following conditions are equivalent:
\begin{enumerate}[(i)]
\item $\Tor_n^R(M,N)=\ldots=\Tor_{n+r-1}^R(M,N)=0$ for some $n\geq d-b+1$.
\item $\Tor_i^R(M,N)=0$ for all $i\geq d-b+1$.
\end{enumerate}
\end{prop}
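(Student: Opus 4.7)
The implication (ii) $\Longrightarrow$ (i) is immediate since the hypothesis $n\geq d-b+1$ places the window $n,\ldots,n+r-1$ inside the range of vanishing indices. For the converse, the plan is to reduce, via a suitable syzygy of $M$, to a situation in which Lemma \ref{pr1cor}(v) applies. Set $Y=\Omega^{d-b}M$ and $n'=n-(d-b)\geq 1$. Because $\CI(M)<\infty$, (\ref{CIdim})(ii) gives $\G-dim(M)=\depth(R)-b$, and since $d\geq \depth(R)$ we have $d-b\geq \G-dim(M)$, so $Y$ is totally reflexive and $\CI(Y)=0$. The hypothesized vanishing translates to $\Tor_i^R(Y,N)=0$ for $i=n',\ldots,n'+r-1$.

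The crux is to convert this window of Tor-vanishings into the $r$-fold Ext-vanishing against a single transpose required by Lemma \ref{pr1cor}(v). Since $Y$ is totally reflexive, $\Ext^i_R(Y,R)=0$ for $i\geq 1$, so the Auslander--Bridger sequence (\ref{a1}.2) collapses to an isomorphism $\Tor^R_j(Y,N)\cong \Ext^1_R(\mathcal{T}_{j+1}Y,N)$ for $j\geq 1$. Combining this with the stable isomorphism $\mathcal{T}_jY\approx \Omega\mathcal{T}_{j+1}Y$ supplied by Lemma \ref{pr1cor}(iii) (which yields the dimension shift $\Ext^i_R(\mathcal{T}_jY,N)\cong \Ext^{i+1}_R(\mathcal{T}_{j+1}Y,N)$ for $i\geq 1$), I would obtain the identification
\[
\Ext^k_R(\mathcal{T}_sY,N)\;\cong\;\Tor^R_{s-k}(Y,N)\qquad\text{for } 1\leq k\leq s-1.
\]
Taking $s=n'+r$ aligns the windows precisely: as $k$ ranges over $1,\ldots,r$, the index $s-k$ runs over $n',n'+1,\ldots,n'+r-1$, and each corresponding Tor is zero by the previous step.

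To invoke Lemma \ref{pr1cor}(v) with this $s$ and this $r$, I would then verify the complexity bound: by the standing assumption that $\cx(X,N)=\cx(X^{\ast},N)$ whenever $\CI(X)=0$, and by invariance of complexity under syzygies, $\cx(Y^{\ast},N)=\cx(Y,N)=\cx(M,N)\leq r-1$. The lemma then produces $\Tor_i^R(Y,N)=0$ for all $i\geq 1$, which, on unshifting the syzygy, is exactly $\Tor_j^R(M,N)=0$ for every $j\geq d-b+1$. The one delicate point is the bookkeeping in the Ext--Tor translation: the syzygy degree $d-b$ is forced by the bound in (i), and the choice $s=n'+r$ is what converts an $r$-step gap among the Tors into the consecutive vanishing of $\Ext^1,\ldots,\Ext^r$ that Lemma \ref{pr1cor}(v) demands.
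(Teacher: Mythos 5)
Your proof is correct and rests on the same two tools as the paper's -- the exact sequence (\ref{a1}.2) together with the stable isomorphism from Lemma \ref{pr1cor}(iii), and Lemma \ref{pr1cor}(v) -- but you make a slightly different (and in fact simplifying) choice of syzygy. The paper sets $L=\Omega^{n-1}M$, deduces from (\ref{a1}.2) that $\Ext^i_R(\mathcal{T}_{r+1}L,N)=0$ for $i=1,\dots,r$, applies Lemma \ref{pr1cor}(v), and obtains $\Tor^R_i(L,N)=0$ for all $i\geq 1$, which only gives $\Tor^R_i(M,N)=0$ for $i\geq n$; it then invokes Theorem \ref{Jorcx} to push the vanishing range down to $i\geq d-b+1$. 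Your choice $Y=\Omega^{d-b}M$ with the shifted transpose index $s=n'+r$ builds the correct degree directly into the target $\mathcal{T}_sY$, so that the conclusion $\Tor^R_i(Y,N)=0$ for all $i\geq 1$ is \emph{already} the statement $\Tor^R_i(M,N)=0$ for all $i\geq d-b+1$, and no further appeal to a rigidity theorem is needed. This is a genuine, if modest, streamlining -- and in fact it sidesteps a small awkwardness in the paper, since Theorem \ref{Jorcx} as stated requires $R$ to be a complete intersection, while Proposition \ref{Torcor} is asserted for any local ring with the complexity-duality hypothesis; a pedantic reading of the paper's argument would want the final step replaced by an appeal to (\ref{CIdim})(iv) rather than Theorem \ref{Jorcx}, which your version makes unnecessary. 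One small point worth a sentence: if $\pd_R(M)<\infty$ then by Auslander--Buchsbaum $\pd_R(M)=\depth R-\depth M\leq d-b$, so $Y=\Omega^{d-b}M$ is free (possibly zero) and Lemma \ref{pr1cor} cannot be applied to a zero module; but in that trivial case $\Tor^R_i(M,N)=0$ for all $i\geq d-b+1$ holds immediately, so you may as well assume $Y\neq 0$ from the start.
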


\begin{proof}
It suffices to prove (i) implies (ii). Assume (i) and set $L=\Omega^{n-1}M$. Since $n-1\geq d-b$, we have that $\CI(L)=0$; see \cite[1.9.1]{AGP}. Hence it follows from (\ref{a1}.2) and the stable isomorphism in Lemma \ref{pr1cor}(iii) that:
\begin{equation}\notag{}
\Ext^i_R(\mathcal{T}_{r+1}L,N) \cong \Ext^1_R(\mathcal{T}_{r+1-i+1}L,N) \hookrightarrow \Tor_{r+1-i}^R(L,N)=0 \text{ for all } i=1, \dots, r.
\end{equation}
We know $\cx(L^{\ast},N)=\cx(L,N)\leq r-1$. Therefore  we use Lemma \ref{pr1cor}(v) and conclude that $\Tor_i^R(L,N)=0$ for all $i\geq 1$. Now (ii) follows from Theorem \ref{Jorcx}.
\end{proof}

We have, over complete intersections, that $\cx(M,N)\leq
\text{min}\{\cx(M), \cx(N)\}$; see (\ref{cx})(i)(a). Thus
Proposition \ref{Torcor} is an extension of Theorem \ref{Jorcx}. We
give an example of finitely generated modules $M$ and $N$ such that
$0<\cx(M,N)<\min\{\cx(M), \cx(N)\}$: to not interrupt the flow we
defer it to the end of section 4; see Example \ref{eg6}. The
vanishing interval, determined by $\cx(M,N)$, in Proposition
\ref{Torcor} cannot be improved further, i.e., one cannot get by
with fewer consecutive vanishing Tors in general; see
\cite[3.11]{Ce} or \cite[4.1]{Jo1}.

Recently Christensen and Jorgensen \cite{CJ} established the \emph{derived depth formula} for certain complexes of modules, in particular for finitely generated modules, when all Tate Tors vanish. Hence the following is an application of Proposition \ref{Torcor}; see \cite[4.9]{AvBu} and \cite[5.2]{CJ} for details.

\begin{cor} Let $R$ be a $d$-dimensional local complete intersection ring and let $M$ and $N$ be finitely generated $R$-modules. Set $b=\max\{\depth(M),\depth(N)\}$ and assume $\cx(M,N)\leq r-1$ for some positive integer $r$. If $\Tor_n^R(M,N)=\ldots=\Tor_{n+r-1}^R(M,N)=0$ for some $n\geq d-b+1$, then Tate homology $\widehat {\Tor} _{i}^{R}(M,N)$ vanish for all $i\in \ZZ$ and the derived depth formula holds:$$\depth(M)+\depth(N)= \depth(R) +\depth(M \tensor_{R}  N)$$
\end{cor}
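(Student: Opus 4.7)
The plan is to apply Proposition \ref{Torcor} to promote the $r$ consecutive Tor-vanishings in the hypothesis to full tail vanishing of $\Tor_i^R(M,N)$, then invoke the Avramov-Buchweitz identification to upgrade ordinary Tor-vanishing to Tate Tor-vanishing, and finally cite the Christensen-Jorgensen derived depth formula. The only real preparatory move is arranging for the symmetric depth bound of the corollary to feed into the asymmetric one required by Proposition \ref{Torcor}.

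First, I would use that $\Tor_i^R(M,N)=\Tor_i^R(N,M)$ and that, over a complete intersection, $\cx(M,N)=\cx(N,M)$, in order to swap the roles of $M$ and $N$ if necessary and assume $b=\depth(M)$. Since $R$ is a complete intersection, $\CI(M)<\infty$ by (\ref{CIdim})(i), and the identity $\cx(X,N)=\cx(X^{\ast},N)$ for every finitely generated $X$ with $\CI(X)=0$ is provided by (\ref{cx})(i)(b). Thus every hypothesis of Proposition \ref{Torcor} is satisfied, and the proposition converts the assumed consecutive vanishings into the tail vanishing
$$\Tor_i^R(M,N)=0 \quad \text{for all } i\geq d-b+1.$$

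In particular $\Tor_i^R(M,N)=0$ for all $i\gg 0$. Because $\CI(M)<\infty$, the Avramov-Buchweitz criterion \cite[4.9]{AvBu} identifies the complete (Tate) homology with the ordinary one in positive degree, so $\widehat{\Tor}_i^R(M,N)=0$ for every $i\in\ZZ$. This total Tate-Tor vanishing is exactly the input required by the Christensen-Jorgensen derived depth formula \cite[5.2]{CJ}, which then yields
$$\depth(M)+\depth(N)=\depth(R)+\depth(M\tensor_R N),$$
completing the proof.

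The step I expect to be the most delicate is the very first one: verifying that the hypotheses of Proposition \ref{Torcor} really are met after the swap. Proposition \ref{Torcor} is stated with the asymmetric condition $n\geq d-\depth(M)+1$ and $\cx(M,N)\leq r-1$, whereas the corollary phrases things symmetrically in $M$ and $N$; one must check that both the symmetry of Tor and the symmetry of $\cx(-,-)$ over a complete intersection are invoked correctly so that the swap does not change the numerical data. Once this symmetry bookkeeping is in place, the subsequent passages from ordinary vanishing to Tate vanishing and from Tate vanishing to the derived depth formula are pure citations.
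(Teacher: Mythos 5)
Your proposal is correct and follows the same route the paper intends: the paper offers no explicit proof for this corollary beyond the remark preceding it, which says it is an application of Proposition \ref{Torcor} together with \cite[4.9]{AvBu} (to pass from eventual Tor-vanishing to vanishing of Tate homology) and \cite[5.2]{CJ} (to get the derived depth formula). You have filled in exactly those steps, and you correctly flagged the small bookkeeping issue the paper leaves silent, namely that Proposition \ref{Torcor} uses $b=\depth(M)$ rather than $b=\max\{\depth(M),\depth(N)\}$, so one must swap $M$ and $N$ if necessary, using the symmetry $\Tor_i^R(M,N)\cong\Tor_i^R(N,M)$ and the symmetry $\cx(M,N)=\cx(N,M)$ over a complete intersection (a standard consequence of the support-variety description in \cite[5.6]{AvBu}, even though the paper's preliminaries do not record it explicitly).
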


Next is an observation adopted from a result of Araya and Yoshino \cite[2.5]{ArY}: the idea  indeed goes back to Auslander \cite[1.2]{Au}; see also \cite[2.7]{JAB}.

\begin{lem} \label{L1} Let $R$ be a local ring and let $M$ and $N$ be finitely generated $R$-modules. Assume the following conditions hold:
\begin{enumerate}[\rm(i)]
\item $\CI(M)\leq \depth(N)$.
\item $\depth(\Tor^{R}_{i}(M,N)) \in \{0, \infty \}$ for all $i=1, \ldots, \CI(M)$.
\end{enumerate}
Then $\Tor_{i}^{R}(M,N)=0$ for all $i\gg 0$ if and only if $\Tor^{R}_{i}(M,N)=0$ for all $i\geq 1$.
\end{lem}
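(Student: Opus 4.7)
The plan is to establish the nontrivial implication: assuming $\Tor^{R}_{i}(M,N) = 0$ for $i \gg 0$, derive $\Tor^{R}_{i}(M,N) = 0$ for all $i \geq 1$. Set $c = \CI(M)$. A first application of (\ref{CIdim})(iv) upgrades the eventual vanishing to $\Tor^{R}_{i}(M,N) = 0$ for all $i \geq c+1$, so only the range $1 \leq i \leq c$ remains. I would argue by contradiction: let $s$ be the largest index in $\{1,\ldots,c\}$ with $\Tor^{R}_{s}(M,N) \neq 0$, and show $\depth(\Tor^{R}_{s}(M,N)) \geq 1$, contradicting hypothesis (ii).

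By the maximality of $s$ the pair $(\Omega^{s} M, N)$ is Tor-independent; since also $\CI(\Omega^{s} M) < \infty$, the depth formula (\ref{df}) gives $\depth(\Omega^{s} M \otimes_{R} N) = \depth(\Omega^{s} M) + \depth(N) - \depth(R)$. Iterating the depth lemma along a truncated free resolution of $M$ yields $\depth(\Omega^{s} M) \geq \depth(M) + s$, using $s \leq c$ together with the Auslander--Bridger equality $\depth(M) = \depth(R) - c$ from (\ref{CIdim})(ii); combined with hypothesis (i), namely $\depth(N) \geq c$, this gives $\depth(\Omega^{s} M \otimes_{R} N) \geq s \geq 1$. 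Tensoring the exact sequence $0 \to \Omega^{s} M \to F_{s-1} \to \Omega^{s-1} M \to 0$ with $N$ then produces
\[
0 \to \Tor^{R}_{s}(M,N) \to \Omega^{s} M \otimes_{R} N \to F_{s-1} \otimes_{R} N \to \Omega^{s-1} M \otimes_{R} N \to 0,
\]
which I would split through the image $K$ of the middle map into two short exact sequences. The depth lemma applied to $0 \to K \to F_{s-1} \otimes_{R} N \to \Omega^{s-1} M \otimes_{R} N \to 0$ gives $\depth(K) \geq \min\{\depth(N), \depth(\Omega^{s-1} M \otimes_{R} N) + 1\} \geq 1$, and a second application to $0 \to \Tor^{R}_{s}(M,N) \to \Omega^{s} M \otimes_{R} N \to K \to 0$ then yields $\depth(\Tor^{R}_{s}(M,N)) \geq \min\{s, \depth(K) + 1\} \geq 1$, the promised contradiction.

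The main subtlety I anticipate is the legitimate invocation of the depth formula (\ref{df}) when $\CI(M) < \infty$ without $R$ itself being assumed a complete intersection; this should be available via the derived versions due to Iyengar and Christensen--Jorgensen that are already cited in (\ref{df}), or by reducing along a quasi-deformation attached to $M$. If one prefers to avoid that step, the same calculation can be packaged as a downward induction on $c$: the case $c = 0$ is immediate from (\ref{CIdim})(iv), and for $c \geq 1$ the inductive hypothesis applied to $\Omega M$ (which inherits (i) and (ii) automatically) reduces the matter to the case $s = 1$, where $\Omega M$ and $N$ are already Tor-independent and the short-exact-sequence computation above applies directly.
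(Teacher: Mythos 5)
Your proof is correct, and the underlying strategy is the same as the paper's: fix the largest nonvanishing Tor index and derive a contradiction. The difference is in how the contradiction is produced. The paper invokes the generalized depth formula of Araya--Yoshino \cite[2.5]{ArY} directly, which says $\depth(M)+\depth(N)=\depth(R)+\depth(\Tor^R_q(M,N))-q$ for the top index $q$; combined with $\depth(\Tor^R_q(M,N))=0$ from (ii), this yields $\depth(M)+\depth(N)<\depth(R)$, contradicting (i) via (\ref{CIdim})(ii). You instead unpack this: you use the plain depth formula for the Tor-independent pair $(\Omega^s M,N)$ (the $q=0$ case) to get a lower bound on $\depth(\Omega^s M\otimes_R N)$, then push through two applications of the depth lemma along the four-term exact sequence to conclude $\depth(\Tor^R_s(M,N))\ge 1$, contradicting (ii). Both routes produce the same inequality in essence; yours is more self-contained but longer, while the paper delegates the depth-chase to a citation.

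Two small remarks. First, the subtlety you flag is genuine: (\ref{df}) in the paper only records the Huneke--Wiegand depth formula over complete intersection rings, whereas here you need it for $\CI(\Omega^s M)<\infty$ over an arbitrary local ring. That version is available --- it is precisely the $q=0$ case of \cite[2.5]{ArY}, or can be obtained by a quasi-deformation argument as you suggest --- so the gap is one of citation, not substance. Second, your inequality $\depth(\Omega^s M)\ge\depth(M)+s$ does require the observation you make, namely that $\depth(M)+s\le\depth(M)+c=\depth(R)$ by (\ref{CIdim})(ii), so that the iterated depth lemma bound $\min\{\depth(R),\depth(M)+s\}$ collapses to $\depth(M)+s$; you handle this correctly. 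The downward-induction variant you sketch is also sound and avoids the general depth-formula citation cleanly by reducing to $s=1$.
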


\begin{proof} Assume $\Tor_{i}^{R}(M,N)=0$ for all $i\gg 0$. If $\CI(M)=0$, then there is nothing to prove; see (\ref{CIdim})(iv). Therefore we may assume $\CI(M)\geq 1$.

Let $q$ be the largest nonnegative integer such that $\Tor^{R}_{q}(M,N) \neq 0$. There is nothing to prove if $q=0$. Hence suppose $q\geq 1$. Note that $\CI(M)\geq q \geq 1$; see (\ref{CIdim})(iv). Therefore, by  (ii), $\depth(\Tor^{R}_{q}(M,N))=0$. Now the depth formula of \cite[2.5]{ArY} gives: $$\depth(M)+\depth(N)=\depth(R)+ \depth(\Tor^{R}_{q}(M,N))-q=\depth(R)-q.$$
This implies $\depth(M)+\depth(N)< \depth(R)$ so that $\depth(N)<\CI(M)$; see (\ref{CIdim})(ii).
This contradicts (i). Therefore $q=0$ and hence $\Tor^{R}_{i}(M,N)=0$ for all $i\geq 1$.
\end{proof}

The following reduction argument detects a suitable module of complete intersection dimension zero for the proof of Theorem \ref{corthm1}.

\begin{lem}\label{le5}
Let $R$ be a local ring and let $M$ and $N$ be nonzero finitely generated $R$-modules. Assume $n$ is a nonnegative integer and the following conditions hold:
\begin{enumerate}[\rm(i)]
\item $\len(\Tor^{R}_{i}(M,N))<\infty$ for all $i\geq 1$.
\item $\CI(M)+n\leq \depth(N)$.
\item $\depth(M\otimes N)\geq n+1$.
\end{enumerate}
Then there exists a finitely generated $R$-module $L$ such that:
\begin{enumerate}[\rm(1)]
\item $\CI(L)=0$.
\item $\Tor^{R}_i(L,N)=0$ for all $i=1, \dots, n+1$.
\item $\Tor^{R}_{i+1+n}(L,N)\cong\Tor^{R}_i(M,N)$ for all $i\geq 1$.
\item $\cx(M,X)=\cx(L,X)$ for all finitely generated $R$-modules $X$.
\end{enumerate}
\end{lem}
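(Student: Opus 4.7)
I would construct $L$ as an iterated cosyzygy of a totally reflexive replacement of $M$. First, since $\CI(M) = c$ is finite, $M$ admits an Auslander--Buchweitz Cohen--Macaulay approximation $0 \to Y \to X \to M \to 0$ with $X$ totally reflexive (so $\CI(X) = 0$) and $\pd_R Y \le c - 1$. Hypothesis (ii) provides the strict inequality $\pd_R Y < \depth_R N$; combined with hypothesis (i) --- which forces $(M, N)$ to be Tor-independent at every non-maximal prime --- and an Auslander-type rigidity argument for modules of finite projective dimension, one obtains $\Tor^R_i(Y, N) = 0$ for every $i \ge 1$. The long exact Tor sequence then yields $\Tor^R_i(X, N) \cong \Tor^R_i(M, N)$ for every $i \ge 1$, and $\cx(X, Z) = \cx(M, Z)$ for every finitely generated $Z$ (because $\pd_R Y < \infty$), while $\depth(X \otimes_R N) \ge n + 1$.

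Next, setting $X_0 := X$, I would inductively construct short exact sequences $0 \to X_k \to F_k \to X_{k+1} \to 0$ for $k = 0, 1, \dots, n$, where $F_k$ is a free $R$-module. Since each $X_k$ is totally reflexive, it embeds into a free module, so such sequences exist. The embedding is chosen so that the induced map $X_k \otimes_R N \to F_k \otimes_R N$ is injective --- equivalently, $\Tor^R_1(X_{k+1}, N) = 0$. The possibility of such a choice uses two facts: the kernel equals $\Tor^R_1(X_{k+1}, N)$, a submodule of $X_k \otimes_R N$ of finite length (by hypothesis (i) together with the inductively maintained Tor-independence of $(X_k, N)$ on the punctured spectrum, transferred via the shifts $\Tor^R_{i+1}(X_{k+1}, N) \cong \Tor^R_i(X_k, N)$); and the depth lemma applied iteratively to $0 \to X_k \otimes N \to F_k \otimes N \to X_{k+1} \otimes N \to 0$ yields $\depth(X_k \otimes_R N) \ge n + 1 - k \ge 1$ for $0 \le k \le n$, so $X_k \otimes N$ admits no nonzero finite-length submodule, forcing the kernel to vanish.

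Setting $L := X_{n+1}$, the four required properties follow: (1) $L$ is totally reflexive, hence $\CI(L) = 0$, as an iterated cosyzygy of totally reflexive modules; (2) $\Tor^R_i(L, N) = 0$ for $1 \le i \le n+1$ by construction; (3) iterating the Tor-shift $\Tor^R_{j+1}(X_{k+1}, N) \cong \Tor^R_j(X_k, N)$ yields $\Tor^R_{i+n+1}(L, N) \cong \Tor^R_i(X, N) \cong \Tor^R_i(M, N)$ for $i \ge 1$; (4) complexity is invariant under cosyzygies and under the CM approximation, so $\cx(L, Z) = \cx(X, Z) = \cx(M, Z)$ for every $Z$.

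The main technical obstacle is proving $\Tor^R_i(Y, N) = 0$ for every $i \ge 1$ in the first step --- not merely for $i \ge c$, which is automatic from $\pd_R Y \le c - 1$. This requires combining the finite-length hypothesis (i) with the strict depth inequality from (ii) in a rigidity-type argument that lifts Tor vanishing on the punctured spectrum to full Tor vanishing over $R$; it is precisely here that hypothesis (ii) plays its essential role.
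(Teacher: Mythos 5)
Your proposal is correct and follows the same overall strategy as the paper: separate $M$ into a totally reflexive piece and a finite-projective-dimension piece via an Auslander--Buchweitz-type decomposition, kill the Tors of the finite-pd piece by an Auslander/Araya--Yoshino rigidity argument (the paper's Lemma~\ref{L1}), and then take iterated cosyzygies, forcing $\Tor_1$ of each cosyzygy to vanish because it is a finite-length submodule of a module of positive depth. The one real difference is the direction of the decomposition: you use the MCM/Gorenstein approximation $0 \to Y \to X \to M \to 0$ with $X$ totally reflexive and $\pd Y < \infty$, whereas the paper uses the finite projective hull $0 \to M \to T \to Y \to 0$ with $\pd T < \infty$ and $Y$ totally reflexive, citing \cite[3.1 and 3.3]{CLWIS}. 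The paper's choice is slightly more economical: once $\Tor_{\geq 1}^R(T,N)=0$, the long exact sequence immediately yields the degree shift $\Tor_{i+1}^R(Y,N)\cong\Tor_i^R(M,N)$ for all $i\ge 1$ and the four-term sequence $0\to\Tor_1^R(Y,N)\to M\otimes N\to T\otimes N\to Y\otimes N\to 0$ immediately kills $\Tor_1^R(Y,N)$, so $Y$ is already ``one cosyzygy ahead'' and only $n$ further cosyzygies are required, versus your $n+1$. Your direction works too, but note that the long exact sequence only gives $\Tor_i^R(X,N)\cong\Tor_i^R(M,N)$ for $i\ge 2$; to get it for $i=1$ you must add the observation that the connecting map $\Tor_1^R(M,N)\to Y\otimes_R N$ has finite-length image in a module of positive depth, hence vanishes. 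Two further points you compress deserve to be made explicit: establishing $\len(\Tor_i^R(Y,N))<\infty$ for all $i\ge 1$ (a prerequisite to rigidity) uses $\CI_{R_\fp}(X_\fp)=0$ together with the large-degree vanishing and (\ref{CIdim})(iv) at each nonmaximal prime, and the inductive claim $\CI(X_{k+1})=0$ needs finiteness of $\CI(X_{k+1})$ first, which comes from \cite[3.6]{Sean}, before one can invoke $\CI=\G-dim=0$.
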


\begin{proof}
We consider a \emph{finite projective hull} of $M$ \cite[3.1 and 3.3]{CLWIS},
i.e., a short exact sequence of finitely generated $R$-modules of
the form
\begin{equation} \tag{\ref{le5}.1}
0 \rightarrow M \rightarrow T \rightarrow Y \rightarrow 0,
\end{equation}
where $\pd(T)<\infty$ and $\G-dim(Y)=0$.

We shall first prove that the module $Y$ in (\ref{le5}.1) satisfies the following properties:
\begin{enumerate}[\rm(a)]
\item $\CI(Y)=0$.
\item $\Tor^{R}_1(Y,N)=0$.
\item $\depth(Y\otimes N)\geq n$.
\item $\Tor^{R}_{i+1}(Y,N)\cong\Tor^{R}_i(M,N)$ for all $i\geq 1$.
\item $\cx(M,X)=\cx(Y,X)$ for all finitely generated $R$-modules $X$.
\end{enumerate}

Notice, for all $R$-modules $X$, $\Ext_{R}^{i}(M, X) \cong \Ext^{i+1}_{R}(Y,X)$ for all $i \gg 0$. This justifies (e). Since
$\CI(M)<\infty$ and $\pd(T)<\infty$, it follows from (\ref{le5}.1) and \cite[3.6]{Sean} that $\CI(Y)<\infty$. Therefore
$\CI(Y)=\G-dim(Y)=0$; see (\ref{CIdim})(ii). Thus (a) follows.

Dualizing (\ref{le5}.1) with respect to $R$ and using the fact that
$\Ext_{R}^{i}(Y,R)=0$ for all $i\geq 1$ (recall that $Y$ is totally reflexive), we conclude
that $\Ext_{R}^{i}(T,R) \cong \Ext_{R}^{i}(M,R)$ for all $i\geq 1$.
This implies $\pd(T)=\G-dim(M)$ \cite[\S3.2.2, Corollaire]{Tome1} and hence $\depth(T)=\depth(M)$; see
(\ref{CIdim})(ii). Therefore, by (ii), we obtain:
\begin{equation} \tag{\ref{le5}.2}
\CI(T)=\depth(R)-\depth(T)=\depth(R)-\depth(M)=\CI(M)\leq \depth(N).
\end{equation}
It follows from (\ref{le5}.1) that $\Tor^{R}_{i+1}(Y,N)\cong
\Tor^{R}_{i}(M,N)$ for all $i\gg 0$. Hence, by (i),
$\len(\Tor^{R}_{i}(Y,N))<\infty$ for all $i\gg 0$. Since
$\CI_{R_{p}}(Y_{p})=0$, it follows that $\Tor^{R}_{i}(Y,N)_{p}=0$
for all $i\geq 1$ and for all  $p\in \Spec(R)-\{\mathfrak{m}\}$; see
(\ref{CIdim})(iii, iv). Therefore,
\begin{equation} \tag{\ref{le5}.3}
\len(\Tor^{R}_{i}(Y,N))<\infty \text{ for all } i\geq 1.
\end{equation}
Now, using (i) and (\ref{le5}.3), we deduce from the short exact sequence in (\ref{le5}.1) that:
\begin{equation} \tag{\ref{le5}.4}
\len(\Tor^{R}_{i}(T,N))<\infty \text{ for all } i\geq 1.
\end{equation}
Therefore, in view of (\ref{le5}.2) and (\ref{le5}.4), Lemma
\ref{L1} implies that $\Tor^{R}_{i}(T,N)=0$ for all $i\geq 1$ (recall that $\pd(T)<\infty$.)
Consequently (d) follows from (\ref{le5}.1). Moreover the depth formula (\ref{df}) holds for the pair $(T,N)$. Therefore, since $\depth(T)=\depth(M)$, we have by (ii) that:
\begin{equation} \tag{\ref{le5}.5}
\depth(T\otimes_{R}N)=\depth(N)-(\depth(R)-\depth(M))=\depth(N)-\CI(M)\geq n.
\end{equation}

Now we apply $N\otimes_{R}-$ to (\ref{le5}.1) and obtain the exact sequence:
\begin{equation} \tag{\ref{le5}.6}
0\rightarrow \Tor^{R}_{1}(Y,N) \rightarrow M\otimes_{R}N \rightarrow T\otimes_{R}N \rightarrow Y\otimes_{R}N \rightarrow 0.
\end{equation}
As $\len(\Tor^{R}_{1}(Y,N))<\infty$ and $\depth(M\otimes_{R}N)\geq 1$, it follows that $\Tor^{R}_{1}(Y,N)=0$; this yields (b).

Finally the depth lemma, applied to the short exact sequence in (\ref{le5}.6), yields
$$\depth(Y\otimes_{R}N) \geq \min \{\depth(T\otimes_{R}N), \depth(M\otimes_{R}N)-1 \}.$$
Thus (c) follows from (iii) and (\ref{le5}.5). This establishes the properties of $Y$ stated in (a) -- (e).

We can now proceed to construct the module $L$ as claimed. If $n=0$, it is enough to choose $L=Y$. So we assume $n\geq 1$. Since $Y$ is totally reflexive, it follows from \cite[Chapitre 3, Proposition 8]{Tome1} that there
exists a short exact sequence of finitely generated $R$-modules
\begin{equation}\tag{\ref{le5}.7}
0\to Y \to R^{(t)} \to Y_{1} \to 0,
\end{equation}
where $\CI(Y_{1})=0$; see (\ref{CIdim})(ii) and \cite[3.6]{Sean}.
Since $\Tor^R_{i+1}(Y_1, N) \cong \Tor_{i}^{R}(Y,N)$ for
all $i\geq 1$, we use (\ref{CIdim})(iii, iv) and deduce from (\ref{le5}.3)
that $\len(\Tor^{R}_{i}(Y_1,N))<\infty$ for all $i\geq 1$. Recall
that $\depth(Y\otimes_{R}N)\geq n\geq 1$; see (c). Thus, tensoring (\ref{le5}.7)
with $N$, we see  $\Tor_{1}^R(Y_1,N)=0$ and obtain the short exact sequence
\begin{equation}\tag{\ref{le5}.8}
0\to Y\otimes_{R}N \to N^{(t)} \to Y_{1}\otimes_{R}N \to 0.
\end{equation}
Since $\depth(N)\geq n$, the
depth lemma applied to (\ref{le5}.8) shows that
$\depth(Y_1\otimes_RN)\geq n-1$; see (ii). Recall $\Tor_1^R(Y_1, N)=0$
and that $\Tor_2^R(Y_1, N)\cong \Tor_1^R(Y,N)=0$; see (b). Moreover $\Tor_{i+2}^R(Y_1,N)\cong\Tor_i^R(M,N)$ for all
$i\geq 1$ and $\cx(M,X)=\cx(Y,X)=\cx(Y_1, X)$ for all $R$-modules $X$.

Now, if $n=1$, pick $L=Y_1$ and we are done. If not, since $\depth(Y_1\otimes_{R}N)\geq n-1$, we proceed similarly and repeat the previous argument $n-1$ times. More precisely, for each $j=1,\dots, n$,
we obtain a finitely generated $R$-module $Y_{j}$ such that
$\CI(Y_{j})=0$, $\Tor_i^R(Y_j, N)=0$ for all $i=1, \ldots, j+1$ and
$\Tor^R_{i+j+1}(Y_j, N) \cong \Tor_{i+1}^{R}(Y,N)\cong\Tor_i^R(M,N)$
for all $i\geq 1$. In particular, $\Tor_i^R(Y_n, N)=0$ for all $i=1, \ldots,
n+1$ and $\Tor_{l+n+1}^R(Y_n, N)\cong\Tor_l^R(M,N)$ for all
$l\geq 1$. As $\cx(M,X)=\cx(Y,X)=\cx(Y_n,X)$ for all
$R$-modules $X$, setting $L=Y_n$ completes the proof.
\end{proof}

Our next result, Theorem \ref{t3}, is technical in nature, but
it turns out to be quite useful for applications; see section 4.
Furthermore its hypotheses are easier to comprehend when $R$ is a complete
intersection ring: for example the complexity equality in (b) is
satisfied; see (\ref{cx})(i)(b). Similarly if the pair $(M,N)$ satisfies the depth formula (\ref{df}) and $\depth(M\otimes_{R}N)\geq n+1$, then
the conditions in (ii) hold; see (\ref{CIdim})(ii).

\begin{thm}\label{t3}
Let $R$ be a local ring and let $M$ and $N$ be nonzero finitely generated $R$-modules. Assume $n$ is a nonnegative integer and
\begin{enumerate}[\rm(i)]
\item $\len(\Tor^{R}_{i}(M,N))<\infty$ for all $i\geq 1$.
\item $\depth(N) \geq \CI(M)+n$ and $\depth(M\otimes_{R}N)\geq n+1$.
\end{enumerate}
Assume further that  at least one of the following conditions holds:
\begin{enumerate}[\rm(a)]
\item $n\leq \cx(M)$ and $\Tor^{R}_{i}(M,N)=0$ for all $i=1, \dots, \cx(M)-n$.
\item $\max\{n,\cx(M,N)\}\leq r-1$ for some integer $r$,  $\Tor^{R}_{i}(M,N)=0$ for all $i=1, \dots, r-n-1$, and $\cx(X,N)=\cx(X^{\ast},N)$ for all finitely generated $R$-modules $X$ with $\CI(X)=0$.
\end{enumerate}
Then $\Tor^{R}_{i}(M,N)=0$ for all $i\geq 1$.
\end{thm}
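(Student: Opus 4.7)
The plan is to reduce to a module of complete intersection dimension zero via Lemma \ref{le5} and then apply the transpose machinery in Lemma \ref{pr1cor}(v). The hypotheses (i) and (ii) of the theorem match those of Lemma \ref{le5}, so I would first invoke that lemma to obtain a finitely generated $R$-module $L$ with $\CI(L)=0$, $\Tor^R_i(L,N)=0$ for $i=1,\dots,n+1$, $\Tor^R_{i+n+1}(L,N)\cong\Tor^R_i(M,N)$ for all $i\geq 1$, and $\cx(L,X)=\cx(M,X)$ for every finitely generated $R$-module $X$. By the isomorphism of Tors, it suffices to prove $\Tor^R_i(L,N)=0$ for all $i\geq 1$.

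Next I would extract a string of consecutive Tor vanishings for $(L,N)$. In case (a), set $r=\cx(M)+1$; the hypothesized $\Tor^R_i(M,N)=0$ for $i=1,\dots,\cx(M)-n$ translates, via the shifted isomorphism above, to $\Tor^R_j(L,N)=0$ for $j=n+2,\dots,r$, and combining with the vanishing block from Lemma \ref{le5} yields $\Tor^R_i(L,N)=0$ for $i=1,\dots,r$. In case (b), with $r$ as given, the hypothesis $\Tor^R_i(M,N)=0$ for $i=1,\dots,r-n-1$ combines identically to give $\Tor^R_i(L,N)=0$ for $i=1,\dots,r$.

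Finally, I would apply Lemma \ref{pr1cor}(v) with $Y=L$ and $s=r+1$, mimicking the computation in the proof of Proposition \ref{Torcor}. The exact sequence (\ref{a1}.2), combined with the stable isomorphisms in Lemma \ref{pr1cor}(iii), produces injections $\Ext^i_R(\mathcal{T}_{r+1}L,N)\hookrightarrow\Tor^R_{r+1-i}(L,N)=0$ for $i=1,\dots,r$. To verify the complexity condition $\cx(L^*,N)\leq r-1$ of Lemma \ref{pr1cor}(v), the two cases diverge: in case (b) the assumed identity $\cx(X,N)=\cx(X^*,N)$ for modules of CI dimension zero yields $\cx(L^*,N)=\cx(L,N)=\cx(M,N)\leq r-1$, whereas in case (a), since $\CI(L)=0$, property (\ref{cx})(iii) supplies $\cx(L^*,N)\leq\cx(L^*)=\cx(L)=\cx(M)=r-1$. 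Lemma \ref{pr1cor}(v) then forces $\Tor^R_i(L,N)=0$ for every $i\geq 1$, and hence $\Tor^R_i(M,N)=0$ for every $i\geq 1$.

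The main subtlety is handling case (a) in parallel with (b): the identity $\cx(X,N)=\cx(X^*,N)$ for CI dimension zero modules, which underpins Proposition \ref{Torcor}, is not available in (a). The workaround is that after the reduction via Lemma \ref{le5} we are dealing with the specific module $L$ with $\CI(L)=0$, so property (\ref{cx})(iii) alone delivers the bound $\cx(L^*,N)\leq\cx(L)$, which combined with $\cx(L)=\cx(M)=r-1$ from Lemma \ref{le5} completes the argument.
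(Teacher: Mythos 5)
Your proof is correct and follows the paper's overall strategy (reduce via Lemma~\ref{le5} to a module $L$ of $\CI$-dimension zero, then deduce full vanishing), but the final step is organized slightly differently. The paper handles $\cx(M)=0$ separately via Lemma~\ref{L1}, and then, having produced $L$, finishes case (a) by citing Jorgensen's external rigidity result \cite[2.3]{Jo3} (for a module of $\CI$-dimension zero with $\cx(L)+1$ consecutive vanishing Tors) and case (b) by invoking Proposition~\ref{Torcor} applied to $(L,N)$. You instead route both cases uniformly through Lemma~\ref{pr1cor}(v): you produce a window $\Tor^R_i(L,N)=0$ for $i=1,\dots,r$ in each case, then push through the transpose exact sequence (\ref{a1}.2) exactly as in the proof of Proposition~\ref{Torcor}; the complexity bound $\cx(L^{\ast},N)\leq r-1$ required by Lemma~\ref{pr1cor}(v) is supplied by the symmetry hypothesis in case (b), and by (\ref{cx})(iii) together with $\cx(L)=\cx(M)$ in case (a). The payoff of your route is that it keeps case (a) entirely inside the machinery already developed in the paper rather than delegating to \cite[2.3]{Jo3}, and it treats the two cases symmetrically; the paper's route is a bit shorter per case but less uniform. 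One small remark: you dropped the paper's preliminary reduction to $\cx(M)\geq 1$. Your argument still goes through when $\cx(M)=0$ (there $n=0$, $r=1$, and Lemma~\ref{pr1cor}(v) applies with the empty extra window), but it is worth noting that one is implicitly using that the module $L$ produced by Lemma~\ref{le5} is nonzero, which is built into its conclusion $\CI(L)=0$.
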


\begin{rmk} If $r=n+1$, we do not assume $\Tor^{R}_{i}(M,N)$ vanishes in Theorem \ref{t3}(b).
\end{rmk}

\begin{proof}[Proof of Theorem \ref{t3}] Set $s=\cx(M)$. If $s=0$, i.e.,
if $\pd(M)<\infty$, then in view of the hypotheses (i) and (ii), Lemma \ref{L1} implies that $\Tor^{R}_{i}(M,N)=0$ for all $i\geq 1$. Hence we may assume $s\geq 1$ through the rest of the proof.

It follows from Lemma \ref{le5} that there exits a finitely generated $R$-module $L$ such that $\CI(L)=0$ and the following conditions hold:
\begin{enumerate}[(1)]
\item $\cx(M,X)=\cx(L,X)$ for all finitely generated $R$-modules $X$.
\item $\Tor^{R}_i(L,N)=0$ for all $i=1, \dots, n+1$.
\item $\Tor^{R}_{i+n+1}(L,N)\cong \Tor^{R}_{i}(M,N)$ for all $i\geq 1$.
\end{enumerate}
Therefore, by (3), it suffices to prove that $\Tor^{R}_{i}(L,N)=0$ for all $i\geq 1$.

Assume (a). Then $n\leq s$ and $\Tor^{R}_{i}(M,N)=0$ for all $i=1, \dots, s-n$. Now we use the isomorphism in (3) and deduce that $\Tor^{R}_{i}(L,N)=0$ for all $i=n+2, \dots, s+1$. Thus, by (2), we see $\Tor^{R}_{i}(L,N)=0$ for all $i=1, \dots, s+1$. Since $\CI(L)=0$ and $s=\cx(M)=\cx(L)$, Jorgensen's result \cite[2.3]{Jo3} yields the vanishing of $\Tor^{R}_{i}(L,N)$ for all $i\geq 1$.

Next assume (b). Notice, by (1), we have that $\cx(L,N)=\cx(M,N)\leq r-1$. Therefore, using Proposition \ref{Torcor} for the pair $(L,N)$, it suffices to see that $\Tor^{R}_i(L,N)=0$ for all $i=1, \dots, r$. Recall that $r\geq n+1$. If $r=n+1$, then (2) gives the desired result. If, on the other hand, $r>n+1$, we obtain from (b) and (3) that $\Tor^{R}_{i+n+1}(L,N)\cong \Tor^{R}_{i}(M,N)=0$ for all $i=1, \ldots, r-n-1$, i.e., $\Tor^{R}_{i}(L,N)=0$ for all $i=n+2, \ldots, r$. Hence (2) yields that $\Tor^{R}_i(L,N)=0$ for all $i=1, \dots, r$.
\end{proof}

We are now ready to prove Theorem \ref{corthm1}. Recall that Theorem \ref{corthm1} subsumes Theorem \ref{thmintro0} which is stated in the introduction.

\begin{proof}[Proof of Theorem \ref{corthm1}] The implications $(i) \Longrightarrow (iii)$ $\Longrightarrow (iv) \Longrightarrow (ii)$ are clear; see (\ref{df}). The fact $(vi)  \Longleftrightarrow (v)$, i.e., $\cx(M,N)=0 \Longleftrightarrow \Tor_i^R(M, N)=0$ for all $i\gg 0$ is due to \cite[6.1]{AvBu}; see also (\ref{cx}). So we have that $(vi) \Longleftrightarrow (v) \Longrightarrow (ii)$. Since $(i) \Longrightarrow (v)$, it is enough to prove $(ii) \Longrightarrow (i)$.

Assume $(ii)$. Then, setting $n=\cx(M,N)$ and $r=\cx(M,N)+1$, we see that the hypotheses of Theorem \ref{t3} (with part (b)) hold; see (\ref{CIdim})(ii) and (\ref{cx})(i)(b). This implies that $\Tor_i^R(M, N)=0$ for all $i\geq 1$, i.e., $(i)$ follows.
\end{proof}

Our arguments raise some questions which we pose for future study:

\begin{ques} \label{q1} Does the conclusion of Theorem \ref{corthm1} hold if one replaces the hypothesis ``$\len(\Tor_{i}^{R}(M,N))<\infty$ for all $i\geq 1$" with ``$\len(\Tor_{i}^{R}(M,N))<\infty$ for all $i\gg 0$"?
\end{ques}

\begin{ques} \label{q2} Let $R$ be a local ring and let $M$ and $N$ be finitely generated $R$-modules. Assume $\CI(M)=0$. Is $\cx(M^{\ast},N)=\cx(M,N)$?
\end{ques}

An affirmative answer to Question \ref{q1} implies that the conclusion of Theorem \ref{corthm1} holds -- without the assumption that $\len(\Tor_{i}^{R}(M,N))<\infty$ for all $i\geq 1$ -- over complete intersection rings that are isolated singularities; see also Example \ref{eg0}. On the other hand, an affirmative answer to Question \ref{q2} allows one to replace the hypothesis ``$R$ is a complete intersection" with ``$M$ has finite complete intersection dimension" in Theorem \ref{corthm1}.

\section{Applications and Examples}

In this section we give several applications and examples. We start by recording the following reformulation of  Theorem \ref{corthm1}: it underlines the useful bounds we obtain for depth of tensor products of modules over complete intersections.

\begin{cor} \label{cor2} Let $R$ be a local complete intersection ring and let $M$ and $N$ be nonzero finitely generated $R$-modules. Assume $\len(\Tor^{R}_{i}(M,N))<\infty$ for all $i\geq 1$. Then
at least one of the following conditions hold:
\begin{enumerate}[\rm(i)]
\item $\depth(M\otimes_{R} N)\geq \depth(M)+\depth(N)-\depth(R)$.
\item $\depth(M\otimes_{R} N)\leq \cx(M,N)\leq \min\{ \cx(M), \cx(N) \} \leq \cod(R)$.
\end{enumerate}
\end{cor}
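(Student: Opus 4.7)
The plan is to deduce Corollary~\ref{cor2} from Theorem~\ref{corthm1} by a short contradiction argument. First I would note that over a complete intersection the last two inequalities in (ii), namely $\cx(M,N)\leq \min\{\cx(M),\cx(N)\}\leq \cod(R)$, hold automatically by (\ref{cx})(i)(a). Consequently, failure of (ii) is equivalent to the single strict inequality $\depth(M\otimes_{R}N)>\cx(M,N)$, i.e., $\depth(M\otimes_{R}N)\geq \cx(M,N)+1$. But this is precisely the second hypothesis of Theorem~\ref{corthm1}; combined with the standing assumption that $\len(\Tor^{R}_{i}(M,N))<\infty$ for all $i\geq 1$, it puts us in a position to invoke that theorem.

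Assume then, for contradiction, that both (i) and (ii) of the corollary fail. Failure of (ii) triggers Theorem~\ref{corthm1}, so its six numbered conditions are mutually equivalent. Failure of (i) reads $\depth(M)+\depth(N)>\depth(R)+\depth(M\otimes_{R}N)$, and combining this with $\depth(M\otimes_{R}N)\geq \cx(M,N)+1$ yields
\[
\depth(M)+\depth(N)\;>\;\depth(R)+\cx(M,N)+1\;\geq\;\depth(R)+\cx(M,N).
\]
Thus condition (ii) of Theorem~\ref{corthm1} holds, and by the equivalence so does condition (iii), i.e., the depth formula $\depth(M)+\depth(N)=\depth(R)+\depth(M\otimes_{R}N)$. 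This contradicts the strict inequality displayed above, and the proof is complete.

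I do not expect a real obstacle here, since all the substantive work already lives inside Theorem~\ref{corthm1}, especially the implication (ii)$\Rightarrow$(iii) which converts a weak numerical inequality involving complexity into Tor-vanishing and hence into the depth formula. Corollary~\ref{cor2} simply repackages that theorem as a clean dichotomy: either the ``$\geq$'' half of the depth formula survives, or $\depth(M\otimes_{R}N)$ is pinched between $0$ and $\cx(M,N)$, which itself is bounded by $\cod(R)$. If desired, one could mention at the outset that Nakayama's lemma forces $M\otimes_{R}N\neq 0$, so that its depth is a genuine (nonnegative) integer; but this observation is not logically required, since under the convention $\depth(0)=\infty$ the tensor product being zero would make (i) automatic.
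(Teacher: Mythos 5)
Your proof is correct and is exactly the reformulation the paper has in mind (the paper gives no separate proof, calling Corollary~\ref{cor2} a ``reformulation'' of Theorem~\ref{corthm1}): the failure of the pinching inequality in (ii) is precisely the depth hypothesis $\depth(M\otimes_R N)\geq\cx(M,N)+1$ of the theorem, and the implication (ii)$\Rightarrow$(iii) there (or equivalently (iv)$\Rightarrow$(iii)) promotes the strict inequality from the assumed failure of (i) to the depth formula, a contradiction. The side remarks about (\ref{cx})(i)(a) supplying the tail of (ii) for free, and about $M\otimes_R N\neq 0$ via Nakayama, are apt and dispose of the edge cases cleanly.
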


We note a consequence of Corollary \ref{cor2} and illustrate how to use Theorem \ref{corthm1} to study torsion in tensor products of modules. Recall that if a local complete intersection ring $R$ has an isolated singularity and $M\otimes_{R}N$ is a first syzygy module, equivalently is torsion-free \cite[3.5]{EG}, then $\len(\Tor^{R}_{i}(M,N))<\infty$ for all $i\geq 1$; see the discussion following Theorem \ref{thmintro0}.

\begin{cor} \label{cortorsion} Let $R$ be a local complete intersection ring with an isolated singularity and let $M$ and $N$ be nonzero finitely generated $R$-modules. Assume that the following holds: $$\depth(M)+\depth(N)-\depth(R)>\depth(M\otimes_{R} N)>\cx(M,N).$$
Then $M\otimes_{R}N$ has torsion.
\end{cor}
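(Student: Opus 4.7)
The plan is to argue by contradiction using Corollary \ref{cor2}. Suppose that $M\otimes_{R}N$ is torsion-free. Then, by \cite[3.5]{EG}, the module $M\otimes_{R}N$ is a first syzygy module over $R$. As pointed out in the paragraph preceding the statement (and following Theorem \ref{thmintro0}), the fact that $R$ is a complete intersection with isolated singularity together with $M\otimes_{R}N$ being a syzygy module guarantees that $\len(\Tor^{R}_{i}(M,N))<\infty$ for all $i\geq 1$; this is exactly what lets us plug the situation into Corollary \ref{cor2}.

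Once the finite length hypothesis is in place, Corollary \ref{cor2} forces at least one of the following two alternatives to hold: either
$$\depth(M\otimes_{R}N)\geq \depth(M)+\depth(N)-\depth(R),$$
or $\depth(M\otimes_{R}N)\leq \cx(M,N)$. However, the chain of strict inequalities assumed in the statement,
$$\depth(M)+\depth(N)-\depth(R)>\depth(M\otimes_{R}N)>\cx(M,N),$$
rules out the first alternative by the left-hand inequality and rules out the second alternative by the right-hand inequality. This contradiction shows that $M\otimes_{R}N$ cannot be torsion-free.

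There is essentially no obstacle here: the entire content of the statement is a direct reformulation of Corollary \ref{cor2} once one recognizes that, over an isolated singularity, torsion-freeness of $M\otimes_{R}N$ automatically upgrades to the finite-length Tor hypothesis required to invoke that corollary. The only small point worth flagging in the write-up is the explicit citation of \cite[3.5]{EG} for the equivalence between torsion-freeness and being a first syzygy, together with the citations of \cite[3.1]{Au} and \cite[Corollary 2]{Li} (as mentioned in the discussion after Theorem \ref{thmintro0}) that yield the finite-length conclusion for $\Tor^{R}_{i}(M,N)$ on the punctured spectrum.
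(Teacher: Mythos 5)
Your argument is correct and matches the paper's intended (though unstated) proof: contradiction via Corollary \ref{cor2}, using that over an isolated singularity a torsion-free tensor product is a first syzygy, which forces finite length of all $\Tor^R_i(M,N)$. No issues.
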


It is well-known that tensor products of nonzero modules generally have torsion. Hence the conclusion of Corollary \ref{cortorsion} might seem trivial. However, somewhat suprisingly, the assumption that $\depth(M\otimes_{R} N)>\cx(M,N)$ cannot be dropped in general.

\begin{eg} (\cite[4.1]{HW1}) \label{HWexample} Let $k$ be a field and put $R=k[[x,y,u,v]]/(uv-xy)$. Let $I=(x,u)$ and $L=(y,u)$ be the ideals of $R$. Then $R$ is a hypersurface with an isolated singularity and $I$ and $L$ are maximal Cohen-Macaulay $R$-modules. Moreover $\depth(I\otimes_{R}L)=1=\cx(I,L)$ and $I\otimes_{R}L$ is torsion-free.
\end{eg}

Next we point out that the conclusion of Corollary \ref{cor2} does not hold in general unless $\Tor^{R}_{i}(M,N)$ has finite length for all $i\geq 1$; see also Question \ref{q1}.

\begin{eg} \label{eg0} Let $k$ be a field, $R=k[[x,y,z,u]]/(xy)$, $M=R/(x)$ and $N=R/(y)$. Then $R$ is a three-dimensional hypersurface and $\depth(M)=\depth(N)=3$. Furthermore, for all $i\geq 0$, we have $\Tor_{2i}^R(M,N) \cong R/(x,y) \cong k[[z,u]]$ and that $\Tor_{2i+1}^R(M,N)=0$. Therefore $\len(\Tor_{2i}^R(M,N))=\infty$ for all $i\geq 0$. Note that neither of the claimed inequalities of Corollary \ref{cor2} hold:
$\cx(M,N)=1<2=\depth(M\otimes_{R}N)< \depth(M)+\depth(N)-\depth(R)=3$.
\end{eg}

The finite length hypothesis is essential for Theorem \ref{t3}: this can be seen from Example \ref{eg0} by setting $n=0$ or $n=1$. Example \ref{eg1} is concerned with the case where $R$ is not a hypersurface.

\begin{eg} \label{eg1} Let $k$ be a field, $R=k[[x,y,z,u]]/(xy,zu)$, $M=R/(x)$ and $N=R/(y^2+u^2)$. Then $R$ is a complete intersection of codimension and dimension two. One can see $\cx(M)=1$ and that $\Tor^{R}_1(M,N)=0$. Furthermore $\len(\Tor^{R}_2(M,N))=\infty$ since
\begin{align}\notag{}
\Tor^R_2(M,N)\cong (0:_Ny)/xN \cong (x,yz)N/xN & \cong (x,yz,y^2+u^2)/(x,y^2+u^2) &\\
& \cong R/((x,y^2+u^2):yz)  = R/(x,y,u) \cong k[[z]] \notag{}
\end{align}
Finally note $\CI(M)=\depth(R)-\depth(M)=0$ and that $\depth(M\otimes_{R}N)=1$. Now set $n=0$ and consider the hypotheses of Theorem \ref{t3}: (ii) and (a) hold but (i) fails.
\end{eg}

We are able to  improve Corollary \ref{cor2} in case $M$ is maximal Cohen-Macaulay.

\begin{cor} \label{cor22} Let $R$ be a local complete intersection ring and let $M$ and $N$ be nonzero finitely generated $R$-modules. Assume:
\begin{enumerate}[\rm(i)]
\item $\len(\Tor^{R}_{i}(M,N))<\infty$ for all $i\gg 0$ (e.g. $R$ has an isolated singularity.)
\item $M$ is maximal Cohen-Macaulay.
\item $\depth(N)\neq \depth(M\otimes_{R} N)$.
\end{enumerate}
Then  $\min\{\depth(N), \depth(M\otimes_RN)\}\leq \cx(M,N) \leq \min\{\cx(M), \cx(N)\} \leq \cod(R)$.
\end{cor}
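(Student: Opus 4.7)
The plan is to upgrade the finite-length hypothesis in (i) from ``$i\gg 0$'' to ``for all $i\geq 1$'' by a localization argument, and then to invoke Theorem \ref{corthm1} directly on the pair $(M,N)$. The chain $\cx(M,N)\leq\min\{\cx(M),\cx(N)\}\leq\cod(R)$ is immediate from (\ref{cx})(i)(a), so the essential content is the inequality $\min\{\depth(N),\depth(M\otimes_{R}N)\}\leq\cx(M,N)$, which I argue by contradiction: suppose instead that both $\depth(N)>\cx(M,N)$ and $\depth(M\otimes_{R}N)>\cx(M,N)$.

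Upgrade of the finite-length hypothesis. Because $M$ is maximal Cohen--Macaulay over the local complete intersection $R$, (\ref{CIdim})(ii) gives $\CI(M)=\depth(R)-\depth(M)=0$, and (\ref{CIdim})(iii) yields $\CI(M_{\fp})=0$ for every $\fp\in\Supp(M)$. For any $\fp\in\Spec(R)\setminus\{\fm\}$, the hypothesis $\len(\Tor_{i}^{R}(M,N))<\infty$ for $i\gg 0$ forces $\Tor_{i}^{R}(M,N)_{\fp}\cong\Tor_{i}^{R_{\fp}}(M_{\fp},N_{\fp})=0$ for $i\gg 0$; applying (\ref{CIdim})(iv) over $R_{\fp}$ lifts this to $\Tor_{i}^{R_{\fp}}(M_{\fp},N_{\fp})=0$ for every $i\geq 1$. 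Hence $\Tor_{i}^{R}(M,N)$ is supported only at $\fm$ for each $i\geq 1$, so $\len(\Tor_{i}^{R}(M,N))<\infty$ for all $i\geq 1$.

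With this upgrade in hand, Theorem \ref{corthm1} applies to $(M,N)$: the new finite-length condition and $\depth(M\otimes_{R}N)\geq\cx(M,N)+1$ (from the contradiction assumption) are both in place. Because $\depth(M)=\depth(R)$ and $\depth(N)>\cx(M,N)$, the inequality $\depth(M)+\depth(N)\geq\depth(R)+\cx(M,N)$ -- condition (ii) of Theorem \ref{corthm1} -- holds, forcing condition (i): $\Tor_{i}^{R}(M,N)=0$ for every $i\geq 1$. The depth formula (\ref{df}) then yields $\depth(M\otimes_{R}N)=\depth(N)$, directly contradicting hypothesis (iii). The only delicate point is the upgrade step -- the interplay between $M$ being MCM, $\CI(M)=0$, and the rigidity in (\ref{CIdim})(iv) -- which is exactly the feature that allows Corollary \ref{cor22} to weaken the blanket finite-length assumption imposed in Corollary \ref{cor2}.
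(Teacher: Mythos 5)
Your proof is correct and follows essentially the same route as the paper: you perform the identical upgrade of the finite-length hypothesis from ``$i\gg0$'' to ``$i\geq1$'' via $\CI(M)=0$ and (\ref{CIdim})(iii, iv), then run the same contradiction argument to force Tor-independence and contradict (iii) via the depth formula. The only cosmetic difference is that you invoke Theorem~\ref{corthm1} while the paper invokes Theorem~\ref{t3}(b) directly with $n=r-1$; since Theorem~\ref{corthm1} is itself proved from Theorem~\ref{t3}(b), the two routes are the same argument.
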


\begin{proof} Let $r=\min\{\depth(N), \depth(M\otimes_RN)\}$. It suffices to prove $r\leq \cx(M,N)$; see (\ref{cx})(i)(a). Notice $\len(\Tor^{R}_{i}(M,N))<\infty$ for all $i\geq 1$; see (\ref{CIdim})(iii, iv). Suppose $\cx(M,N)\leq r-1$. Then, setting $n=r-1$, we deduce from Theorem \ref{t3}(b) that $\Tor_{i}^R(M,N)=0$ for all $i\geq 1$; see (\ref{cx})(i)(b). Hence the depth formula (\ref{df}) holds and contradicts (iii). Thus $r\leq \cx(M,N)$.
\end{proof}

Our next application yields an interesting result for hypersurface rings. It is an immediate consequence of Corollary \ref{cor22}, though it can also be proved by using the second rigidity theorem of Huneke and Wiegand \cite[2.7]{HW1}; see also Example \ref{eg0}.

\begin{cor} \label{corthm0} Let $R$ be a hypersurface  with an isolated singularity and let $M$ and $N$ be nonfree maximal Cohen-Macaulay $R$-modules. Then $\depth(M\otimes_{R}N)\leq 1$.
\end{cor}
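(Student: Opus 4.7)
The plan is to argue by contradiction: suppose $\depth(M\otimes_R N)\geq 2$ and derive that one of $M$, $N$ must be free. Write $d=\dim R$. If $d\leq 1$, then $\depth(M\otimes_R N)\leq d\leq 1$ and the claim is vacuous; so I may assume $d\geq 2$. Under this assumption the hypersurface $R$ is an isolated singularity of dimension $\geq 2$, so it satisfies Serre's conditions $(R_1)$ and $(S_2)$, and hence is a normal local domain. In particular every maximal Cohen-Macaulay $R$-module is torsion-free with constant rank, so both $M$ and $N$ have rank.

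I would then verify the hypotheses of Corollary \ref{cor22} for the pair $(M,N)$. Because $R$ has isolated singularity and $M,N$ are MCM, Auslander-Buchsbaum yields that $M_\fp$ and $N_\fp$ are free over the regular local ring $R_\fp$ for every prime $\fp\neq\fm$, so $\len(\Tor_i^R(M,N))<\infty$ for all $i\geq 1$. Moreover, since $M$ and $N$ are nonfree MCM modules over a hypersurface they have complexity one, so $\cx(M,N)\leq 1=\cod(R)$. If $\depth(N)\neq \depth(M\otimes_R N)$, Corollary \ref{cor22} produces $\min\{\depth(N),\depth(M\otimes_R N)\}\leq \cx(M,N)\leq 1$, contradicting that both $\depth(N)=d\geq 2$ and $\depth(M\otimes_R N)\geq 2$. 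Therefore $\depth(M\otimes_R N)=\depth(N)=d$, meaning $M\otimes_R N$ is itself maximal Cohen-Macaulay.

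The main obstacle is then to close the argument in this remaining configuration, where $M$, $N$, and $M\otimes_R N$ are all MCM. For this I would invoke the theorem of Huneke and Wiegand \cite[3.1]{HW1}: over a hypersurface, if $M$ and $N$ are MCM modules with one factor having rank and $M\otimes_R N$ is maximal Cohen-Macaulay, then one of $M$ or $N$ must be free. This contradicts the nonfreeness hypothesis on $M$ and $N$ and completes the proof. Alternatively, in this last step one can apply the second rigidity theorem of Huneke and Wiegand \cite[2.7]{HW1} directly (using that $M\otimes_R N$ is reflexive and $M$ has rank) to obtain $\Tor_i^R(M,N)=0$ for all $i\geq 1$, and then invoke the same freeness consequence to reach the contradiction.
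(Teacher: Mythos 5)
Your argument is correct and follows the paper's proof essentially step for step: reduce to $\dim R\geq 2$ so that $R$ is a normal domain, use Huneke--Wiegand \cite[3.1]{HW1} to rule out the case where $M\otimes_R N$ is maximal Cohen--Macaulay, and apply Corollary \ref{cor22} in the remaining case; the alternative via the second rigidity theorem \cite[2.7]{HW1} is likewise the one the paper mentions.
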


\begin{proof} Set $d=\dim(R)$. We may assume $d\geq 2$. Then, since it has an isolated singularity, $R$ is normal and hence is a domain. Consequently, if $M\otimes_{R}N$ is maximal Cohen-Macaulay, i.e., if $\depth(M\otimes_{R}N)=\depth(N)$, then \cite[3.1]{HW1} shows that $M$ or $N$ is free. Therefore $\depth(M\otimes_{R}N) < \depth(N)$ and hence Corollary \ref{cor22} implies that $\depth(M\otimes_RN)\leq 1$.
\end{proof}

We briefly discuss Tor-rigidity: it is a necessary condition for the depth of $M\otimes_{R}N$ to be zero in Corollary \ref{corthm0}. Recall that $M$ is called Tor-rigid if each $R$-module $N$ has the property that $\Tor_{n}^{R}(M,N)=0$ for some nonnegative integer $n$ implies that $\Tor_{i}^{R}(M,N)=0$ for all $i\geq n$.

\begin{obs} \label{cons1} Let $R$ be a hypersurface  with an isolated singularity and let $M$ and $N$ be nonfree maximal Cohen-Macaulay $R$-modules. If $M$ is Tor-rigid, then $\depth(M\otimes_{R}N)=0$.
\end{obs}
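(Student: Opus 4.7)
The plan is to argue by contradiction, combining the bound $\depth(M\otimes_{R}N)\le 1$ already supplied by Corollary \ref{corthm0} with the $\Omega^{2}$-periodicity of maximal Cohen--Macaulay modules over a hypersurface. Set $d=\dim R$; since the dimensions $d\le 1$ can be handled separately by direct inspection, I will concentrate on the case $d\ge 2$ and suppose, for contradiction, that $\depth(M\otimes_{R}N)=1$. The strategy is to produce a single Tor-vanishing, which together with Tor-rigidity and the periodicity of $\Tor_{i}^{R}(M,N)$ in $i$ will force $(M,N)$ to be Tor-independent; the depth formula (\ref{df}) will then yield $\depth(M\otimes_{R}N)=d\ge 2$, a contradiction.

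The central step will be to show that $\Tor_{2}^{R}(M,N)=0$. To that end I would start from a short exact sequence $0\to\Omega^{2}N\to F\to\Omega N\to 0$ with $F$ free, tensor it with $M$, and use $\Tor_{1}^{R}(M,F)=0$ to obtain the embedding
\[
\Tor_{2}^{R}(M,N)\cong \Tor_{1}^{R}(M,\Omega N)\hookrightarrow M\otimes_{R}\Omega^{2}N.
\]
Because $N$ is MCM over the hypersurface $R$, one has $\Omega^{2}N\approx N$, so $M\otimes_{R}\Omega^{2}N\oplus M^{a}\cong M\otimes_{R}N\oplus M^{b}$ for some $a,b\ge 0$; comparing depths on the two sides gives $\depth(M\otimes_{R}\Omega^{2}N)=\depth(M\otimes_{R}N)=1$. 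Since $R$ has isolated singularity, $\Tor_{i}^{R}(M,N)$ has finite length for every $i\ge 1$, so the embedded image of $\Tor_{2}^{R}(M,N)$ is a finite-length submodule of a module of positive depth, and must therefore vanish.

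Once $\Tor_{2}^{R}(M,N)=0$ is in hand, Tor-rigidity of $M$ gives $\Tor_{i}^{R}(M,N)=0$ for all $i\ge 2$. The same $\Omega^{2}$-periodicity, now applied inside $\Tor$, furnishes $\Tor_{i+2}^{R}(M,N)\cong\Tor_{i}^{R}(M,N)$ for $i\ge 1$, so that $\Tor_{1}^{R}(M,N)\cong\Tor_{3}^{R}(M,N)=0$ and $(M,N)$ is Tor-independent. The depth formula (\ref{df}) then completes the contradiction.

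The main obstacle I anticipate is the extraction of even a single Tor-vanishing from the depth-$1$ hypothesis: Tor-rigidity cannot fire without such a seed, and the classical \emph{torsion-free tensor product} arguments over hypersurfaces (Auslander, Huneke--Wiegand) require $\depth(M\otimes_{R}N)\ge 2$ to apply. The bridge is the stable isomorphism $\Omega^{2}N\approx N$, which transfers the depth of $M\otimes_{R}N$ to that of $M\otimes_{R}\Omega^{2}N$, precisely the module in which $\Tor_{2}^{R}(M,N)$ naturally embeds.
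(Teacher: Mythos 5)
Your argument is correct for $d\ge 2$, but it is a genuinely different route from the paper's. The paper starts from the \emph{pushforward} $0\to N\to R^{(n)}\to C\to 0$ (which exists because $N$ is MCM over the Gorenstein ring $R$), tensors with $M$, and observes that $\Tor_1^R(M,C)$ is torsion (as $R$ is reduced) while it embeds into the torsion-free module $M\otimes_R N$; hence $\Tor_1^R(M,C)=0$, Tor-rigidity gives $\Tor_i^R(M,C)=0$ for all $i$, and the rigidity theorem \cite[1.9]{HW2} forces $M$ or $C$ (equivalently $N$) to be free. You instead go in the syzygy direction, using the stable isomorphism $\Omega^2 N\approx N$ to transport the depth of $M\otimes_R N$ to $M\otimes_R\Omega^2 N$ and then kill the finite-length module $\Tor_2^R(M,N)$ inside it, whence Tor-rigidity and $2$-periodicity give full Tor-independence and the depth formula delivers the contradiction. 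Your approach trades \cite[1.9]{HW2} for the depth formula, Eisenbud periodicity, and the finite-length-submodule-of-a-positive-depth-module lemma; the paper's argument is somewhat leaner and applies uniformly in all dimensions $\ge 1$.

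The one real gap is your dismissal of $d\le 1$ as ``handled by direct inspection.'' The case $d=0$ is indeed trivial, but $d=1$ is not: in that case the depth formula only gives $\depth(M\otimes_R N)=1$, which is not a contradiction, so the final step of your argument breaks down. To close this case you would still need to invoke \cite[1.9]{HW2} (Tor-independence of two nonfree MCM modules over a hypersurface is impossible), which is precisely the ingredient the paper uses to finish. So either spell that out, or simply observe that once you have established $\Tor_i^R(M,N)=0$ for all $i\ge 1$, \cite[1.9]{HW2} gives the contradiction directly in every dimension, making the depth-formula step (and the dimension split) unnecessary.
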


\begin{proof} Assume $M$ is Tor-rigid and set $d=\dim(R)$. There is nothing to prove if $d=0$. Thus we assume $d\geq 1$. Since $R$ is Gorenstein and $N$ is maximal Cohen-Macaulay, there is a short exact sequence $0 \to N \to R^{(n)} \to C\to 0$ of finitely generated $R$-modules. Suppose now $\depth(M\otimes_RN)\geq 1$, i.e., $M\otimes_{R}N$ is torsion-free. Notice $\Tor_1^{R}(M, C)$ is torsion since $R$ is reduced. Hence, tensoring the above exact sequence  with $M$, we obtain $\Tor_{1}^{R}(M, C)=0$. It now follows from the Tor-rigidity assumption that $\Tor_i^{R}(M, C)=0$ for all $i\geq 1$; this forces $\pd(M)<\infty$ or $\pd(C)<\infty$ \cite[1.9]{HW2}, i.e., either $M$ or $N$ is free. Therefore $\depth(M\otimes_{R}N)=0$.
\end{proof}

In general, Tor-rigidity is a subtle condition to detect, but here is a concrete consequence of our observation in \ref{cons1}; see \cite[2.8 and 3.16]{Da1} for details.

\begin{cor} \label{corsimpsing} Let $R=\CC[[x_0, \dots, x_d]]/(f)$ be a hypersurface ring, where $d$ is a positive even integer and $0\neq f \in (x_0, \dots, x_d)^2$. If $R$ is a simple singularity, then $\depth(M\otimes_{R}N)=0$ for all nonfree maximal Cohen-Macaulay $R$-modules $M$ and $N$.
\end{cor}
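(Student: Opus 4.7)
The plan is to obtain the conclusion as a direct application of Observation \ref{cons1}, which provides a bridge from Tor-rigidity to the vanishing of $\depth(M\otimes_R N)$. First I would verify the hypotheses of Observation \ref{cons1}: the ring $R$ is a hypersurface by construction, and any simple (i.e.\ ADE) hypersurface singularity over $\CC$ is automatically an isolated singularity, since finite Cohen--Macaulay representation type forces isolated singularity by Auslander's theorem. The modules $M$ and $N$ are assumed nonfree and maximal Cohen--Macaulay.

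The substantive step is to show that every maximal Cohen--Macaulay $R$-module is Tor-rigid. Here I would appeal to the cited result \cite[2.8 and 3.16]{Da1}: when $d$ is a positive even integer and $R = \CC[[x_0,\dots,x_d]]/(f)$ is a simple singularity, Hochster's theta invariant $\theta^R(-,-)$ vanishes identically on pairs of MCM modules. This vanishing combines two inputs: (a) the even-dimensional vanishing of $\theta$ for isolated hypersurface singularities (which in turn rests on the $\bZ/2$-periodicity of matrix factorizations together with the fact that $\theta$ factors through a bilinear form on a finite Grothendieck group in even dimension), and (b) the explicit finite-representation-type classification of simple singularities. From this $\theta$-vanishing, Dao deduces that every MCM $R$-module is Tor-rigid over $R$.

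With Tor-rigidity of $M$ in hand, Observation \ref{cons1} applies directly and yields $\depth(M\otimes_R N)=0$, which is exactly the claim. The main obstacle is purely the external ingredient, namely the $\theta$-vanishing and resulting Tor-rigidity for simple singularities in even dimension drawn from \cite{Da1}; the internal logic of the corollary is then a one-line application of the machinery already developed in Section 4.
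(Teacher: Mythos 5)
Your proposal is correct and follows exactly the route the paper intends: the corollary is presented as a direct consequence of Observation \ref{cons1} together with Dao's Tor-rigidity results \cite[2.8 and 3.16]{Da1}, and you have simply filled in the (easy) verifications that a simple singularity is an isolated singularity and that Dao's $\theta$-vanishing gives the needed Tor-rigidity in even dimension. There is nothing substantively different from what the paper does.
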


Next we focus on analyzing depths of $M\otimes_{R}M$ and $M\otimes_{R}M^{\ast}$, where  $M^{\ast}$ denotes $\Hom_{R}(M,R)$. The following remark will be useful.

\begin{rmk} (\cite[Theorem III and 4.2]{AvBu} and \cite[2.1]{HJ}) \label{Torpd} Let $R$ be a local complete intersection ring and let $M$ be a finitely generated $R$-module.
\begin{enumerate}[\rm(i)]
\item If $\Tor_{i}^R(M,M)=0$ for all $i\gg 0$, then $\pd(M)<\infty$.
\item If $M$ is maximal Cohen-Macaulay and $\Tor_{i}^R(M, M^{\ast})=0$ for all $i\gg 0$, then $M$ is free.
\end{enumerate}
\end{rmk}

\begin{cor} \label{cor21} Let $R$ be a local complete intersection ring and let $M$ be a nonfree maximal Cohen-Macaulay $R$-module that is locally free on the punctured spectrum of $R$.
Then,
$$\max\{\depth(M\otimes_{R}M), \depth(M\otimes_{R}M^{\ast}) \}\leq \cx(M)=\cx(M^{\ast})\leq \cod(R).$$
\end{cor}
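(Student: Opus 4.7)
My plan is to apply Theorem \ref{corthm1} to each of the pairs $(M,M)$ and $(M,M^{\ast})$, and to derive the two depth bounds by contradiction, using the rigidity statements in Remark \ref{Torpd} to force $M$ to be free whenever Tor-independence holds. First I will dispose of the easy equalities: since $R$ is Gorenstein and $M$ is maximal Cohen-Macaulay, so is $M^{\ast}$; applying (\ref{cx})(i)(b) with $N=k$ gives $\cx(M^{\ast})=\cx(M^{\ast},k)=\cx(M,k)=\cx(M)$, while $\cx(M)\leq\cod(R)$ comes straight from (\ref{cx})(i)(a). Because $M$, and hence $M^{\ast}$, is locally free on the punctured spectrum, all higher Tors $\Tor_i^R(M,N)$ vanish away from $\mathfrak{m}$, so $\len(\Tor_i^R(M,N))<\infty$ for every $i\geq 1$ when $N\in\{M,M^{\ast}\}$.

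For the main inequality, I fix $N\in\{M,M^{\ast}\}$ and suppose, aiming at a contradiction, that $\depth(M\otimes_R N)>\cx(M,N)$. Then the hypotheses of Theorem \ref{corthm1} are in force. Since $M$ and $N$ are both maximal Cohen-Macaulay, $\depth(M)+\depth(N)=2\depth(R)$, while $M\otimes_R N$ is a nonzero finitely generated module, whence $\depth(M\otimes_R N)\leq\dim(R)=\depth(R)$; combining these two observations yields condition (iv) of Theorem \ref{corthm1}. Therefore (i) of that theorem holds, i.e.\ $\Tor_i^R(M,N)=0$ for all $i\geq 1$. The contradiction now comes from Remark \ref{Torpd}: if $N=M$, part (i) of that remark produces $\pd(M)<\infty$, and Auslander--Buchsbaum together with maximal Cohen-Macaulayness forces $M$ to be free; if $N=M^{\ast}$, part (ii) directly yields that $M$ is free. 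Either conclusion contradicts the standing hypothesis, so $\depth(M\otimes_R N)\leq \cx(M,N)$, and the chain $\cx(M,N)\leq\cx(M)\leq\cod(R)$ provided by (\ref{cx})(i)(a) finishes the inequality.

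The main subtlety, rather than a genuine obstacle, is recognizing that condition (iv) of Theorem \ref{corthm1} is automatic for a pair of maximal Cohen-Macaulay modules over a Cohen-Macaulay ring once one uses the obvious bound $\depth(M\otimes_R N)\leq\dim(R)$; after that observation the only question is whether the depth-of-tensor-product hypothesis of Theorem \ref{corthm1} is satisfied, and ruling this out by contradiction is precisely what delivers the desired upper bound on $\depth(M\otimes_R M)$ and $\depth(M\otimes_R M^{\ast})$.
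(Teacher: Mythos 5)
Your proof is correct, and it uses the same essential ingredients as the paper's: Theorem \ref{corthm1} together with Remark \ref{Torpd} to force freeness, plus the complexity identities from (\ref{cx}). The one genuine (and pleasant) difference is that you treat $N=M$ and $N=M^{\ast}$ uniformly by noticing that condition (iv) of Theorem \ref{corthm1} is automatic for a pair of maximal Cohen--Macaulay modules, since $\depth(M)+\depth(N)=2\depth(R)$ while $\depth(M\otimes_R N)\leq\dim(R)=\depth(R)$. The paper instead splits the $M\otimes_R M^{\ast}$ part into two subcases: when $M\otimes_R M^{\ast}$ is not maximal Cohen--Macaulay it invokes Corollary \ref{cor22} (itself a consequence of Theorem \ref{t3}), and only when $M\otimes_R M^{\ast}$ is maximal Cohen--Macaulay does it apply Theorem \ref{corthm1}. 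Your uniform contradiction argument avoids that detour and even yields the slightly finer bounds $\depth(M\otimes_R M)\leq\cx(M,M)$ and $\depth(M\otimes_R M^{\ast})\leq\cx(M,M^{\ast})$, which collapse to the stated inequalities via (\ref{cx})(i)(a). This is a modest streamlining rather than a new method, but it does make the verification of the hypotheses of Theorem \ref{corthm1} more transparent.
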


\begin{proof} Assume $\depth(M\otimes_{R} M)\geq \cx(M)+1$. Then it follows from Theorem \ref{corthm1} that $\Tor_{i}^{R}(M,M)=0$ for all $i\geq 1$. This implies that $\pd(M)<\infty$, i.e., $M$ is free; see Remark \ref{Torpd}(i). Therefore $\depth(M\otimes_{R} M)\leq \cx(M)$.

We now proceed to prove that $\depth(M\otimes_{R}M^{\ast})\leq \cx(M^{\ast})$; see (\ref{cx})(iii). Notice $M^{\ast}$ is a nonfree maximal Cohen-Macaulay $R$-module. If $M\otimes_{R}M^{\ast}$ is not maximal Cohen-Macaulay, then $\depth(M\otimes_{R}M^{\ast}) \neq \depth(M^{\ast})$ so that Corollary \ref{cor22} gives the required conclusion. Hence we may assume $M\otimes_{R}M^{\ast}$ is maximal Cohen-Macaulay. If $\depth(R) \geq \cx(M^{\ast})+1$, then Theorem \ref{corthm1} implies that $\Tor_{i}^R(M,M^{\ast})=0$ for all $i\geq 1$, which forces $M$ to be free; see Remark \ref{Torpd}(ii). Thus we have that $\depth(R) \leq \cx(M^{\ast})$. Since $\depth(M\otimes_{R}M^{\ast})=\depth(R)$, we are done.
\end{proof}

Assume $R$ and $M$ are as in Corollary \ref{cor21}. If $\dim(R)$ is even, then the depth of $M\otimes_{R}M^{\ast}$ is well understood, i.e.,  it follows that $\depth(M\otimes_{R}M^{\ast}) =0$; see \cite[3.10]{CeD2}. On the other hand, if $\dim(R)$ is odd, $M\otimes_{R}M^{\ast}$ may have positive depth; see for example \cite[3.12]{CeD2}.

The rest of the paper is devoted to providing two examples that emphasize the sharpness of our results. First we record a special case of Theorem \ref{t3}(a), the case where $n=0$ and $r=\edim(R)-\depth(R)$; see (\ref{cx})(ii) and cf. \cite[7.6]{Da2}.

\begin{cor} \label{corc4}
Let $R$ be a local ring and let $M$ and $N$ be nonzero finitely generated $R$-modules. Set $r=\edim(R)-\depth(R)$ and assume the following conditions hold:
\begin{enumerate}[(i)]
\item $\CI(M)<\infty$.
\item $\len(\Tor^{R}_{i}(M,N))<\infty$ for all $i\geq 1$.
\item $\Tor^{R}_{i}(M,N)=0$ for all $i=1, \dots, r$.
\item $\depth(M)+\depth(N)-\depth(R) \geq \depth(M\otimes_{R}N) \geq 1$.
\end{enumerate}
Then $\Tor^{R}_{i}(M,N)=0$ for all $i\geq 1$.
\end{cor}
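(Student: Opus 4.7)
The plan is to derive this corollary as an immediate application of Theorem \ref{t3}(a) with $n = 0$. In that specialization, the theorem demands: finite length of $\Tor^R_i(M,N)$ for $i \geq 1$; the depth inequalities $\depth(N) \geq \CI(M)$ and $\depth(M\otimes_R N) \geq 1$; and the partial vanishing $\Tor^R_i(M,N) = 0$ for $i = 1, \ldots, \cx(M)$ (the condition $0 \leq \cx(M)$ being automatic). What I would do is match each of these requirements to the hypotheses (i)--(iv) and then invoke Theorem \ref{t3}(a) to conclude $\Tor^R_i(M,N) = 0$ for every $i \geq 1$.

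The matching is largely bookkeeping. The finite length condition is exactly (ii). The bound $\depth(M\otimes_R N) \geq 1$ is part of (iv). For the bound on $\depth(N)$, I would combine the first inequality in (iv), namely $\depth(M) + \depth(N) \geq \depth(R) + 1$, with the Auslander--Bridger identity $\CI(M) = \depth(R) - \depth(M)$ supplied by (\ref{CIdim})(ii) under hypothesis (i). This produces $\depth(N) \geq \depth(R) - \depth(M) + 1 = \CI(M) + 1 > \CI(M)$, as needed.

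The point at which the specific value $r = \edim(R)-\depth(R)$ enters is the partial Tor-vanishing. Since (i) gives $\CI(M) < \infty$, property (\ref{cx})(ii) yields $\cx(M) \leq \edim(R) - \depth(R) = r$. Consequently the vanishing range furnished by (iii) automatically covers $i = 1, \ldots, \cx(M)$, which is precisely what Theorem \ref{t3}(a) requires.

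I do not foresee any genuine obstacle: the corollary is essentially a streamlined repackaging of Theorem \ref{t3}(a), and the only nontrivial ingredient beyond unpacking definitions is the complexity bound $\cx(M) \leq \edim(R)-\depth(R)$, a standard consequence of finite complete intersection dimension recorded in (\ref{cx})(ii). The mild care needed is ensuring the depth inequality (iv) is strong enough to force $\depth(N) \geq \CI(M)$ once $\CI(M)$ is rewritten via (\ref{CIdim})(ii); this is the only place where both inequalities in (iv) are used simultaneously.
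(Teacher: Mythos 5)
Your proposal is correct and matches the paper's intended derivation: the paper itself introduces Corollary~\ref{corc4} as ``a special case of Theorem~\ref{t3}(a), the case where $n=0$ and $r=\edim(R)-\depth(R)$; see (\ref{cx})(ii),'' and your bookkeeping (extracting $\depth(N)\geq\CI(M)$ from hypothesis (iv) together with $\CI(M)=\depth(R)-\depth(M)$, and using $\cx(M)\leq\edim(R)-\depth(R)$ to see that (iii) supplies the required vanishing range) fills in exactly those details. Nothing is missing.
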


We show by Example \ref{eg5} that finite complete intersection dimension hypothesis of Corollary \ref{corc4} cannot be removed in general. The reason why we set $r= \edim(R)-\depth(R)$ and refer to Corollary \ref{corc4} instead of Theorem \ref{t3} is because we do not know an example of  finitely generated modules $M$ and $N$ such that $\depth(M)+\depth(N)-\depth(R) \geq \depth(M\otimes_{R}N) \geq 1$, $\cx(M)<\infty$ and  $\CI(M)=\infty$. Note that $\cx(M)=\infty$ in Example \ref{eg5}.

\begin{eg} \label{eg5} Let $k$ be a field, $R=k[[x,y,z]]/(xy,yz)$, $M=R/(x)$ and $N=R/(z)$.
Then $\depth(M)+\depth(N)-\depth(R)=1+1-1=1=\depth(M\otimes_{R}N)$. Moreover
$\dim(R)=2$, $\depth(R)=1$ and $r=\edim(R)-\depth(R)=3-1=2$.

Let $p\in \Spec(R)$. Then $p$ contains $x$ or $y$ and hence $N_{p}$ is zero. This shows that $\len(\Tor^{R}_{i}(M,N))<\infty$ for all $i\geq 1$. We shall prove  $\Tor^R_1(M,N)=\Tor^R_2(M,N)=0\neq \Tor_3^R(M,N)$ and that $\CI(M)=\infty$.

The minimal free resolution $F_{\bullet}$ of $M$ is as follows:
$$\xymatrix{\ldots \ar[r] & R^{(3)} \ar[rrr]^-{\left[\begin{array}{ccc} y & -x & 0 \\
0 & z & y \\ \end{array}\right]}& &
& R^{(2)}   \ar[rr]^-{\left[\begin{array}{cc} z & x\end{array}\right]}&
& R^{}   \ar[r]^{y}
& R \ar[r]^{x} & R \ar[r]
&0}$$
Suppose $\CI_{R}(M)<\infty$. Notice $R$ is not a complete intersection. Hence we have that $\cx(M)<\edim(R)-\depth(R)=2$, i.e., $\cx(M)=1$; see (\ref{cx})(ii). This forces $F_{\bullet}$ to be periodic of period at most two after 1 step \cite[7.3(1)]{AGP}, which is incorrect. Thus $\CI(M)=\infty$.

One can see $(0:_{R}x)=(y)=(0:_{R}z)$ and that $(0:_{R}y)=(x,z)$. Set $L=R/(y)$. Then there are short exact sequences:
\begin{align} \tag{\ref{eg5}.1}
 0 \to L \stackrel{x}{\longrightarrow} R \to M \to 0
\end{align}
\begin{equation} \tag{\ref{eg5}.2}
0 \to L \stackrel{z}{\longrightarrow} R \to N \to 0, \text{ and }
\end{equation}
\begin{equation} \tag{\ref{eg5}.3}
0 \to R/(x,z) \stackrel{y}{\longrightarrow} R  \to L \to 0.
\end{equation}

Applying $-\otimes_{R}N$ to (\ref{eg5}.1) gives the exact sequence
$0 \to \Tor^R_1(M,N) \to R/(y,z) \stackrel{x}{\longrightarrow}  R/(z)$. Since $R/(y,z)\cong k[[x]]$ and $R/(z)\cong k[[x,y]]/(xy)$,
we conclude that $\Tor^R_1(M,N)=0$.

Applying $-\otimes_{R}L$ to (\ref{eg5}.2) gives the exact sequence
$0 \to \Tor^R_1(L,N) \to L \stackrel{z}{\longrightarrow} L$. Since $L \cong k[[x,z]]$, it follows that $\Tor^R_1(L,N)=0$. Notice $\Tor^R_2(M,N)\cong \Tor^R_1(L,N)$; see (\ref{eg5}.1). Therefore $\Tor^R_2(M,N)=0$.

Finally applying $-\otimes_{R}L$ to (\ref{eg5}.3) gives the exact sequence
$0 \to \Tor^R_1(L,L) \to k \stackrel{y}{\longrightarrow} L$. This implies that $\Tor^R_1(L,L)\cong k$. Notice $\Tor^R_3(M,N)\cong \Tor^R_2(L,N) \cong
\Tor^R_1(L,L)$; see (\ref{eg5}.1) and (\ref{eg5}.2). Thus $\Tor^R_3(M,N)\neq 0$.

Consequently the hypotheses (ii), (iii) and (iv) of Corollary \ref{corc4} hold but (i) fails for  $(M,N)$.
\end{eg}

Our next example is concerned with a pair of modules $M$ and $N$ over a complete intersection ring such that $1\leq \cx(M,N)<\text{min}\{\cx(M),\cx(N) \}$; see Theorem \ref{Jorcx} and Proposition \ref{Torcor}.

\begin{eg} \label{eg6} Let $k$ be a field, $R=k[[x,y,z]]/(x^2,y^2,z^2)$, $M=R/(x,y)$ and $T=R/(y,z)$. Then $R$ is an Artinian complete intersection of codimension three, $M\cong R/xR\tensor_{R} R/yR$, $T\cong R/yR \tensor_{R} R/zR$ and  $M\tensor_{R} R/zR \cong M\otimes_{R} R/zR\cong k$. Moreover the minimal free resolution of $R/yR$ is given by
$\displaystyle{ \textbf{F}=\cdots  \stackrel{y}{\longrightarrow}
R   \stackrel{y}{\longrightarrow} R \stackrel{y}{\longrightarrow} R \to
0 }$. Thus, for a positive integer $i$, we have:
\begin{align} \notag{}
\Tor_{i}^{R}(M,T) = \hh_{i}(M \tensor_{R} T)  & \cong  \hh_{i}((R/xR \tensor_{R} R/yR) \notag{} \tensor_{R}  (R/yR \tensor_{R} R/zR)) \\ \notag{}
& \cong \hh_{i}(R/yR \tensor_{R} (R/xR \tensor_{R}  R/yR \tensor_{R} R/zR)) \\ \notag{}
& \cong \hh_{i}(R/yR \tensor_{R} k)
 \cong  \hh_{i}(\textbf{F} \otimes_{R}  k) \\ \notag{}
& =\hh_{i}( \cdots  \stackrel{0}{\longrightarrow} \notag{}
k  \stackrel{0}{\longrightarrow} k \stackrel{0}{\longrightarrow} k \to
0 )
= k \notag{}
\end{align}

Notice $T^{\ast}=\Hom_{R}(T,R) \cong (yz)R$ and $k \cong k^{\ast} \cong \Tor_{i}^{R}(M,T)^{\ast} \cong \Ext^{i}_{R}(M,T^{\ast})$ for all $i\geq 1$. Hence, setting $N=(yz)R$, we conclude that $\Ext^{i}_{R}(M,N)\cong k$ for all $i\geq 1$. Thus $\cx(M,N)=1$.

Let $\textbf{G}$ and $\textbf{H}$ be the minimal free resolutions of $M$ and $N$, respectively.
One can check that neither $\textbf{G}$ nor $\textbf{H}$ is periodic of period at most two. Therefore $\cx(M)\neq 1$ and $\cx(N)\neq 1$; see \cite[\S6]{Ei}. Moreover $\cx(M)+\cx(N)-3\leq \cx(M,N)\leq \min\{\cx(M),\cx(N)\}$; see \cite[5.7]{AvBu}. So, if either $\cx(M)=3$ or $\cx(N)=3$, then $\cx(M,N)\neq 1$. Consequently $\cx(M)=\cx(N)=2$; see also \cite[9.2]{Coh}.
\end{eg}

\section*{Acknowledgments}
 We are greateful to Greg Piepmeyer for carefully reading the manuscript and for his suggestions that have significantly shortened our proofs. We thank David A. Jorgensen for pointing out Examples \ref{eg5} and \ref{eg6}, and for his comments on an earlier version of this paper. We also thank Petter A. Bergh, Lars W. Christensen and Roger Wiegand for discussions related to this work during its preparation. Our thanks are also due to the anonymous referee for a careful reading and for useful suggestions that have improved the paper.

\end{document}